\DeclareMathAlphabet{\mathpzc}{OT1}{pzc}{m}{it}
 \numberwithin{equation}{section}                        
\newcounter{thmccc}[section]
\newcommand{\thmcount}{thmccc}                 
\newcounter{specialcounter}
\newtheorem{Thm}[\thmcount]{Theorem}
\newtheorem{Sthm}[specialcounter]{Theorem}
\newtheorem{Cor}[\thmcount]{Corollary}
\newtheorem{Lem}[\thmcount]{Lemma}
\newtheorem{Prop}[\thmcount]{Proposition}
\newtheorem{Rem}[\thmcount]{Remark}
\newtheorem{Defn}[\thmcount]{Definition}
\newtheorem{Ex}[\thmcount]{Example}
\newtheorem{Asu}[\thmcount]{Assumption}
\newtheorem{Sol}[\thmcount]{Solution}
\newtheorem*{Thmx}{Theorem}
\newtheorem*{Corx}{Corollary}
\newtheorem*{Lemx}{Lemma}
\newtheorem*{Propx}{Proposition}
\newtheorem*{Remx}{Remark}
\newtheorem*{Defnx}{Definition}
\newtheorem*{Exx}{Example}
\newtheorem*{Asux}{Assumption}
\newtheorem*{Solx}{Solution}
\newtheorem{theorem}{Theorem}
\newcommand{\settheoremtag}[1]{
  \let\oldthetheorem\thetheorem
  \renewcommand{\thetheorem}{#1}
  \g@addto@macro\endtheorem{
    \addtocounter{theorem}{-1}
    \global\let\thetheorem\oldthetheorem}
  }
\newcommand \eq[1]{\begin{equation} #1 \end{equation}}
\newcommand \eqx[1]{\begin{equation*}  #1 \end{equation*}}
\newcommand \al[1]{\begin{align} #1 \end{align}}
\newcommand \alx[1]{\begin{align*}  #1 \end{align*}}
\renewcommand \sp[1]{\begin{equation} \begin{split} #1 \end{split} \end{equation}}
\newcommand \spx[1]{\begin{equation*} \begin{split} #1 \end{split} \end{equation*}}
\newcommand \ite[1]{\begin{itemize}  #1 \end{itemize}}
\newcommand{\thm}[2]{\begin{Thm} \label{#1} #2 \end{Thm}}
\newcommand{\lem}[2]{\begin{Lem} \label{#1} #2 \end{Lem}}
\newcommand{\cor}[2]{\begin{Cor} \label{#1} #2 \end{Cor}}
\newcommand{\prop}[2]{\begin{Prop} \label{#1} #2 \end{Prop}}
\newcommand{\rem}[2]{\begin{Rem} \label{#1} #2 \end{Rem}}
\newcommand{\defn}[2]{\begin{Defn} \label{#1} #2 \end{Defn}}
\newcommand{\pr}[1]{\begin{proof} #1 \end{proof}}
\newcounter{comcount}
\newcommand{\green}{\textcolor{green}}
\renewcommand{\hline}{\vbox{\hrule width\textwidth height 1pt}\smallskip}
\renewcommand{\a}{\alpha}       \newcommand{\be}{\beta}         \newcommand{\e}{\varepsilon}
\newcommand{\w}{\varphi}                 \newcommand{\de}{\delta}
        \newcommand{\la}{\lambda}
\newcommand{\CC}{\mathbb{C}}
\newcommand{\HH}{\mathbb{H}}  \newcommand{\hh}{\mathcal{H}}
  \newcommand{\kk}{\mathcal{K}}
  \renewcommand{\ll}{\mathcal{L}}
  \newcommand{\mm}{\mathcal{M}}
\newcommand{\NN}{\mathbb{N}}
\newcommand{\RR}{\mathbb{R}}  
  \renewcommand{\tt}{\mathcal{T}}
\newcommand{\ZZ}{\mathbb{Z}}  
\newcommand{\supp}{\mathrm{supp}}
\newcommand{\8}{\infty}
\newcommand{\Rn}{{\RR^n}}
\newcommand{\Rd}{{\RR^n}}
\renewcommand{\rm}[1]{\mathrm{#1}}
\newcommand{\wt}[1]{\widetilde{#1}}
\newcommand{\abs}[1]{\left| #1 \right|}
\newcommand{\set}[1]{\left\{ #1 \right\}}
\newcommand{\norm}[1]{\left\| #1 \right\|}
\newcommand{\eee}[1]{\left( #1 \right)}
\newcommand{\BMO}{BMO[\mu,h]}
\newcommand{\LLLL}{L^2_{h^{-1}}(\mu)}
\renewcommand{\hh}{H}
\begin{document}

\title{Hardy spaces meet harmonic weights}

 \author[M. Preisner,    A. Sikora,   and   L.  Yan ]{ Marcin Preisner,   \ Adam Sikora, \ and \ Lixin Yan}
\address{Marcin Preisner, Instytut Matematyczny, Uniwersytet Wroc\l awski, \ pl. Grunwaldzki 2/4, 50-384 Wroc\l aw, Poland }
\email{marcin.preisner@uwr.edu.pl}
\address{
Adam Sikora, Department of Mathematics, Macquarie University, NSW 2109, Australia}
\email{adam.sikora@mq.edu.au}
\address{
Lixin Yan, Department of Mathematics, Sun Yat-sen (Zhongshan) University, Guangzhou, 510275, P.R. China}
\email{mcsylx@mail.sysu.edu.cn
}

\date{\today}
 \subjclass[2010]{42B30, 42B35, 47B38}
\keywords{Hardy space,  harmonic weight, atomic decomposition, maximal function, Lusin function, Littlewood-Paley function,
non-negative self-adjoint operator, Gaussian bounds, Doob transform}

\newcommand{\st}{\sqrt{t}}

\begin{abstract}
   	We investigate the Hardy space $H^1_L$ associated with a self-adjoint operator~$L$ defined in a general setting
	in \cite{Hofmann_Memoirs}.  We	assume that there exists an $L$-harmonic
	non-negative function $h$ such that the semigroup $\exp(-tL)$, after applying the Doob
	transform related to $h$, satisfies the upper and lower Gaussian estimates. Under this assumption we describe an
	illuminating characterisation of  the Hardy space $H^1_L$ in terms of a simple  atomic decomposition associated
	with the $L$-harmonic function $h$. Our approach also yields a natural characterisation of the $BMO$-type space
	corresponding to the operator $L$ and dual to $H^1_L$ in the same circumstances.
	
	The applications include surprisingly wide range of operators, such as:  Laplace operators with Dirichlet  boundary
	conditions on some domains in $\Rd$, Schr\"odinger operators with certain potentials,  and Bessel operators.
\end{abstract}


\maketitle

\section{Introduction and statement of main results}
\setcounter{equation}{0}

\subsection{Background.}\label{ssec11}

The classical notion of Hardy spaces is a mainstream  masterpiece
in the core of harmonic analysis, see for example \cite{Fefferman_Stein, Stein, Stein_Weiss}.
There are several equivalent definitions of the real variable Hardy  space $H^1({\mathbb{R}}^n)$.
For example,  $H^1({\mathbb{R}}^n)$ can be defined   in terms of the maximal function associated with  the heat
 semigroup generated by the Laplace operator $\Delta$ on   $\mathbb R^n$.
Recall that a  locally integrable function $f$ on ${\mathbb R^n}$ is said to be in  $H^1({\mathbb{R}}^n)$  if
\begin{eqnarray}\label{e1.1a}
M_{\Delta} f(x)=\sup_{t>0}\abs{e^{{t}\Delta}f(x)}
\end{eqnarray}
belongs to $  L^1({\mathbb{R}}^n)$. If this is the case, then we set
$$
\|f\|_{H^1({\mathbb{R}}^n)}=\|M_{\Delta} f\|_{L^1({\mathbb{R}}^n)}.
$$

The definition above suggests defining Hardy spaces corresponding to a  general self-adjoint operator $L$ by simply
replacing the standard heat propagator by the semigroup $\exp(-tL)$ in \eqref{e1.1a}. Alternatively one can define $H^1_L$
using the square function approach. The theory of Hardy spaces associated with operators  has attracted a lot of attention in
 last   decades  and  has been a  very active research topic  in harmonic analysis, see for example \cite{Auscher_unpublished,
 Auscher_McIntosh_Russ, Auscher_Russ, Chang_Krantz_Stein,  Duong_Yan, DZ_JFAA, Hofmann_Memoirs, Hofmann_Mayboroda,
  Hofmann_Mayboroda_McIntosh, Song_Yan_2018, Yang_Yang} and the references therein. Very systematic and general theory
  of such Hardy spaces was described in \cite{Hofmann_Memoirs}.  In a more specific situation, such as some classes of
  Schr\"odinger operators,  the Hardy spaces $H^1_L$ were studied also by Dziuba\'nski and Zienkiewicz, see for example
   \cite{DZ_Colloquium, DZ_Revista2, DZ_JFAA}.

In our study we investigate $H^1_L$ in the case, when  there exists an $L$-harmonic
non-negative function $h$ such that  the semigroup $\exp(-tL)$, after applying the Doob transform related to $h$,
 satisfies the upper and lower Gaussian estimates. In this situation we are able to obtain a natural characterisation
 of $H^1_L$ in terms of atomic decompositions in which atoms satisfy
 the cancellation associated {with} the harmonic function $h$.

Recall that one of the most fundamental aspect of the theory of Hardy spaces is the atomic decomposition {theorem}
obtained by Coifman and Latter, see \cite{Coifman_Studia} {for} $n=1$ and
\cite{Latter_Studia} {for} $n\geq 2$.  It is known  that $f\in H^1({\mathbb{R}}^n)$ if and only if
$$f = \sum_{k=1}^\8 \la_k a_k,$$
where $\sum_k|\la_k| <\8$ and $a_k$ are classical atoms, i.e. there exist balls $B_k$ such that
\begin{eqnarray}\label{e1.2a}
\supp \, a_k \subseteq B_k, \qquad \norm{a_k}_\8 \leq |B_k|^{-1}, \qquad  \int_{B_k} a_k(x) \, dx=0.
\end{eqnarray}
Moreover, we can choose $\la_k$'s such that
$$C^{-1} \norm{f}_{H^1({\mathbb{R}}^n)} \leq \sum_{k=1}^\8 |\la_k| \leq C \norm{f}_{H^1({\mathbb{R}}^n)}. $$
The atomic description of Hardy spaces is particularly useful and it is the primary point of interest of this paper. Our main  observation in this study
states that under our assumption involving the Doob transform such characterisation
remains valid with the cancellation part of condition \eqref{e1.2a}  replaced by the relation
\begin{equation}\label{can}
\int_{B_k} a_k(x)h(x) \, dx=0.
\end{equation}

Another fundamental aspect of classical theory of Hardy spaces is the duality of $H^1(\Rd)$ and the space of function{s}
 of bounded mean oscillation, $BMO(\Rd)$, see \cite{Fefferman_Stein}. For Hardy and $BMO$ spaces  associated with operators
 such duality was  investigated and established in \cite{Duong_Yan}. In the setting which we consider our approach allows us
 to describe {a} natural interpretation of such duality.

Recall that in {the} classical theory the $BMO(\Rd)$ space is defined by the norm
$$\norm{f}_{BMO} = \sup_B |B|^{-1} \int_B |f(x) - f_B| \, dx,$$
where $f_B = |B|^{-1} \int_B f(x)\, dx$ and the supremum is taken over all balls in $\Rd$.
The elements $BMO(\Rd)$ space  are defined up to a constant function. It appears that (in a proper sense) $BMO(\Rd)$ is the
dual of $H^1(\Rd)$. The new cancellation condition \eqref{can} suggests that if $h$ is the $L$ harmonic function then the
 $BMO$ norm associated to $L$ should   be defined
based on  the following expression
$$
\sup_B \inf_c \eee{\mu_h(B)^{-1} \int_B \abs{g(x) - c\, h(x)}^2 h(x) d\mu(x) }^{1/2} <\8.
$$
Note that if $h$ is a constant function then  in
virtue of the John-Nirenberg Inequality the above integral defines the norm
equivalent with the classical $BMO$ definition. It is convenient for us to define $BMO$ in terms of the $L^2$ condition,
see the proof of Lemma \ref{lem_dual}. Theorem \ref{thmB} stated below confirms that the above definition gives a coherent
 description of the  duality between Hardy and $BMO$ spaces associated to the operator $L$ in the considered setting.

The aim of this paper is to study Hardy spaces and their duals for self-adjoint operators defined on spaces of homogeneous type.
In particular, we shall study operators related to some harmonic functions in the sense that the heat semigroup kernel, after
the Doob transform, satisfies lower and upper Gaussian estimates.
Our assumption involving the Doob transform  are specific and the resulting theory is not as general as
in \cite{Hofmann_Memoirs}, but it still includes several interesting applications. For example Laplace operators
with the Dirichlet boundary conditions which were considered by Auscher, Russ, Chang, Krantz and Stein
in
\cite{Auscher_Russ, Chang_Krantz_Stein} can be investigated using the proposed
framework, see Subsection \ref{ssec61} below. Examples also  include  Schr\"odinger operators with certain potentials,  and Bessel operators.
Our result gives a~natural and explicit atomic description of
Hardy spaces with {atoms related} to the $L$-harmonic function $h$. Let us mention that in several examples it is
possible that there exist two or more different  bounded harmonic
functions, see for example \cite{Carron_Coulhon_Hassell}. We hope it is possible to obtain similar description of the
corresponding Hardy and $BMO$ spaces in the case of several harmonic functions but we intend to investigate {such a}
possibility in a different project.

Our characterization, see Theorems \ref{main2} and \ref{thmB} below, is different {from} the ones studied before,
even for well-known classical operators, see e.g. \cite{Auscher_Russ, Chang_Krantz_Stein, DZ_Studia_DK}. {In these papers,
the atoms that describe Hardy spaces can be divided into two classes: some of them are similar to classical atoms, and some of
 them do not satisfy cancellation condition (one can think that a function $|B|^{-1} \chi_B(x)$ is an atom for a proper choice
 of a ball $B$, c.f. \cite{Goldberg_Duke}). Our result gives more homogeneous (and maybe even more natural) description - all
 the atoms satisfy cancellation condition, but with respect to the harmonic function $h(x)$. } Nevertheless the both descriptions
 are equivalent, see Section \ref{appendix} below. A secondary goal of our study is to give a list of examples that satisfy
 assumptions of Theorems \ref{main2} and \ref{thmB}, see Section \ref{sec_examples}. However, we believe that there are many
 more operators that fit to our context.

\subsection{Assumptions and main results.}

Let  $(X,d,\mu)$ be a metric measure space
endowed with a distance $d$
and a nonnegative Borel doubling
measure $\mu$ on $X$, c.f. \cite{Coifman-Weiss_Analyse, CoifmanWeiss_BullAMS}.
Recall that a measure $\mu$ satisfies the  doubling condition provided that there exists a~constant $C>0$ such that for all $x\in X$ and for all $r>0$,
    \begin{eqnarray*}
    \mu(B(x,2r))\leq C\mu(B(x, r)).
    \label{e1.1}
    \end{eqnarray*}
 Note that the doubling property  implies the following
strong homogeneity property,
    \begin{equation}
    \mu(B(x, \lambda r))\leq C\lambda^n \mu(B(x, r))
    \label{e1.2d}
    \end{equation}
for some $C, n>0$ uniformly for all $\lambda\geq 1, r>0$, and $x\in X$.
In Euclidean space with {the} Lebesgue measure, the parameter $n$ corresponds to
the dimension of the space, but in our more abstract setting, the optimal $n$
 need not even to be an integer.

Throughout the   paper we assume that $\mu(X) = \8$. We shall consider operators~$L$, that are always assumed to be self-adjoint,
 non-negative, and defined on a domain $\mathrm{Dom}(L) \subseteq L^2(\mu)$. Moreover, we assume that the semigroup $T_t = \exp(-tL)$
  generated by $L$ has a nonnegative integral kernel
    $$
    T_t f(x) = \int_ X T_t(x,y) f(y) \, d\mu(y)
    $$
that satisfies the pointwise upper Gaussian estimates, i.e. there exist $c,C>0$, such that
    \eq{\label{UG}\tag{UG}
    0\leq  T_t(x,y) \leq C \mu(B(x,\st))^{-1} \exp\eee{-\frac{d(x,y)^2}{ct}}, \quad x,y\in X, \ t>0.
    }

There are several equivalent definitions of  Hardy spaces   $H^1_{L }(X)$ associated with~$L$, see Section \ref{ssecH1} below.
The simplest and most direct is in terms of the maximal operator  associated with the heat semigroup generated by $L$, namely
    \begin{eqnarray}\label{e1.2m}
    M_{L} f(x)=\sup_{t>0}\big|e^{-tL}f(x) \big|
    \end{eqnarray}
with $x\in X, f\in L^2(\mu)$.
Then we define the space $H^1_{L }(X)$  as the completion of the set $\{ f \in L^2(\mu):  \|M_{L} f\|_{L^1(\mu)}< \infty \}$
with respect to $L^1$-norm of the maximal function,
  \begin{eqnarray*}
  \norm{f}_{H^1_L(X)}  = \|M_{L} f\|_{L^1(\mu)}.
  \end{eqnarray*}

\subsubsection{Motivation: an atomic decomposition result.}\label{ssec12}
Let us now recall some results from \cite{Dziubanski_Preisner_Annali_2017}. Assume that we have a space $(X,d,\nu)$ and an
operator $\ll$ related to a semigroup $\tt_t = \exp(-t\ll)$. Notice, that we have changed the notation: $(X, d, \mu)$, $L$,
$T_t$ are replaced by $(X,d,\nu)$, $\ll$, $\tt_t$ (in what follows, the latter will be used for the operators after applying the Doob transform).
Following \cite{Dziubanski_Preisner_Annali_2017}, suppose that the semigroup kernel $\tt_t(x,y)$ satisfies lower and upper Gaussian estimates, i.e.
there exist $c_1, c_2,C>0$ such that
\eq{\label{LG}\tag{ULG}
 C^{-1} \nu(B(x,\st))^{-1} \exp\eee{-\frac{d(x,y)^2}{c_1t}} \leq  \tt_t(x,y) \leq C \nu(B(x,\st))^{-1} \exp\eee{-\frac{d(x,y)^2}{c_2t}}
}
for  $x,y\in X$ and $t>0$.
\prop{prop_existence}{\cite[Proposition 3]{Dziubanski_Preisner_Annali_2017}
Assume that a semigroup $\tt_t$ satisfies \eqref{LG}. Then there exists a function $\w$ such that
\eq{\label{double-bounded-h}
0<c\leq \w(x) \leq C
}
and $\w$ is $\ll$-harmonic in the sense that for all $t>0$,
\eq{\label{L-harm1}
\tt_t\w (x) =\w(x), \quad \text{a.e. } x\in X.
}
}
For details we refer the reader to \cite[Sec. 2]{Dziubanski_Preisner_Annali_2017}. Let us notice that $\varphi$ is
unique (up to a constant), see Corollary \ref{uniqueness}. By Liouville's theorem, the constant functions are the only
bounded harmonic functions when $\ll$ is the the Laplace operator $\Delta$ on ${\mathbb R}^n$. Following \cite{Dziubanski_Preisner_Annali_2017}, we
call a function $a$ a $(\nu,\w)$-atom if there exists a ball $B$ such that:
\eq{\label{w-atoms}
\supp \, a \subseteq B, \qquad \norm{a}_\8 \leq \nu(B)^{-1}, \qquad \int a(x) \w(x) d\nu(x) = 0.
}
The atomic Hardy space      { $H^1_{at} (\nu,\w)$   }is defined then in a standard way using $(\nu,\w)$-atoms.
It is shown in \cite[Theorem 1]{Dziubanski_Preisner_Annali_2017} that  if $\ll$ satisfies \eqref{LG} and an additional geometric
continuity assumption, see \cite[Theorem 1]{Dziubanski_Preisner_Annali_2017},
then for $\varphi$ from Proposition \ref{prop_existence} we have
\begin{eqnarray}\label{e1.6}
\norm{f}_{H^1_{\ll }(X)} \simeq  \norm{f}_{H^1_{at} (\nu,\, \w)}.
\end{eqnarray}

Obviously, the assumption \eqref{LG} is quite restrictive. However, there is a more general version \eqref{ULG'} that includes a
harmonic function $h(x)$, which can have bounded values but not separated from zero, or can be even unbounded. Such harmonic
functions appear e.g. when studying the Dirichlet Laplacian on a domain above the graph of a bounded $C^{1,1}$ function on $\Rd$ or
the exterior of a $C^{1,1}$ compact convex domain in~$\Rd$. Moreover, the same story appears when studying some Schr\"odinger
operators (e.g. $-\Delta + \gamma |x|^{-2}$ on $\Rd$, $n\geq 3$, with $\gamma >0$), or for some Bessel operator defined on a
 weighted half-line. We shall discuss the details in Section \ref{sec_examples}.

\subsubsection{Main results}
The following assumptions are motivated by the notion of the Doob transform (or $h$-transform), see e.g.
 \cite{Gyrya-Saloff-Coste,  Grigoryan_HeatKernels}. Assume that there exists a function $h: \, X \to (0,\8)$ such that:

\begin{enumerate}[label={(H\arabic*)}]

\item \label{H3} : $h$ is $L$-harmonic in the sense that for all $t>0$
\eqx{\label{L-harm}
T_t h (x) =h(x), \quad \text{a.e. } x\in X.
}

\item : The metric-measure space $(X,d,\mu_{h^2})$ is doubling, where $\mu_{h^2}$ is the measure with the density $ h^2(x) d\mu(x)$.
\label{H1}

\item \label{H4} :
 There exist $c_1, c_2, C>0$ such that for $x,y\in X$ and $t>0$ we have
 \begin{equation}
 \label{ULG'}\tag{ULG\textsubscript{h}} \frac{C^{-1}}{\mu_{h^2}(B(x,\st))} \exp\eee{-\frac{d(x,y)^2}{c_1t}}\leq \frac{T_t(x,y)}{h(x)h(y)}
 \leq \frac{C}{\mu_{h^2}(B(x,\st))} \exp\eee{-\frac{d(x,y)^2}{c_2t}}.
 \end{equation}
\end{enumerate}
\smallskip

Let us notice that \ref{H1} and \ref{H4} imply that the action of $T_t$ on $h$ is well defined, even if $h$ is unbounded.
Moreover, Proposition \ref{rem_rem} below says that, in some sense, the assumption \ref{H3} is always true after some mild change of function
$h$. However, we decided to state \ref{H3} as an assumption to emphasize the relation of $L$-harmonicity of $h$ with the estimates \eqref{ULG'}.

Now we define our atomic Hardy space that
will be used to describe $H_{L}^1(X)$.

\begin{Defn}\label{atoms-h}
We call a function $a$ an $[\mu , h]$-atom if there exists a ball $B$ such that:
\al{
\label{at1c}
&\circ \quad  \supp \, a \subseteq B,\\
\label{at1a}
&\circ \quad  \norm{a}_{L^2\eee{h^{-1}\mu}} \leq \mu_{h}(B)^{-1/2},\\
\label{at1b}
&\circ \quad \int a(x) h(x) d\mu(x) = 0.
}
Then, by definition, a function $f$ belongs to the atomic Hardy space $H^1_{at}[\mu , h]$ if
    $
    f = \sum_k \la_k a_k,
    $
where $a_k$ are  $[\mu , h]$-atoms and $\sum_k|\la_k| <\8$. Moreover, define
$$\norm{f}_{{H^1_{at}[\mu , h]}} = \inf \sum_k|\la_k|,$$
where the infimum is taken over all representations of $f$ as above.
\end{Defn}

Observe that by \eqref{at1c}--\eqref{at1a} every  $[\mu,h]-$atom $a$  satisfies the estimate
$$\norm{a}_{L^1(\mu)} \leq \norm{a}_{L^2\eee{h^{-1}\mu}} \mu_h(B)^{1/2} \leq 1,$$
so the series {$f = \sum_k \la_k a_k$} above {converges} in $L^1(\mu)$-norm and a.e. By a standard argument, $H^1_{at}[\mu , h]$ is a Banach space.

The main goal of this paper is to provide a natural and simple atomic {description} of $H^1_L(X)$
 (in the spirit of \eqref{w-atoms}--\eqref{e1.6}). Recall that $A_p(\mu)$ is the Muckenhoupt class, see \eqref{weight} below.
 Our result can be stated in a following way.

\settheoremtag{A}
\begin{theorem}\label{main2}
Suppose that an operator $L$, its semigroup $T_t=\exp(-tL)$, and a function $h(x)$ satisfy the assumptions \ref{H3}--\ref{H4}. There
exists $p_0\in [1, 2]$ such that if $h^{-1}\in A_{p_0}(\mu_{h^2})$, then the spaces $H^1_{L }(X)$ and {$H^1_{at}[\mu , h]$} coincide and
$$\norm{f}_{H^1_{L }(X)} \simeq \norm{f}_{{H^1_{at}[\mu , h]}}.$$
\end{theorem}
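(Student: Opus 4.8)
The plan is to \emph{trivialise} the operator by the Doob transform, reducing the statement to a weighted Hardy-space identity for a semigroup with two-sided Gaussian bounds. First I would set $\nu:=\mu_{h^2}$ and introduce the isometry $U\colon L^2(\mu)\to L^2(\nu)$, $Uf:=f/h$; that $U$ is an isometry is immediate from $\int|f/h|^2\,d\nu=\int|f|^2\,d\mu$. Conjugating the semigroup, $\tt_t:=UT_tU^{-1}$ acts by $\tt_t g=h^{-1}T_t(hg)$ and has, with respect to $\nu$, the integral kernel
\[
\tt_t(x,y)=\frac{T_t(x,y)}{h(x)h(y)}.
\]
By \ref{H4} this kernel satisfies the two-sided bounds \eqref{LG} on the space $(X,d,\nu)$, which is doubling by \ref{H1}, and by \ref{H3} the constant function is $\ll$-harmonic, $\tt_t\mathbf 1=h^{-1}T_th=\mathbf 1$. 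Thus $(\ll,\tt_t,\nu)$ is exactly of the type treated in \cite{Dziubanski_Preisner_Annali_2017}, now with the bounded harmonic weight $\omega\equiv\mathbf 1$.

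Next I would transport both norms through $U$. Since $T_tf=h\,\tt_t(f/h)$ pointwise, we get $M_Lf=h\cdot M_{\ll}(f/h)$, and because $h\,d\mu=h^{-1}\,d\nu$,
\[
\norm{M_Lf}_{L^1(\mu)}=\int M_{\ll}(f/h)\,h\,d\mu=\norm{M_{\ll}(f/h)}_{L^1(w\,d\nu)},\qquad w:=h^{-1}.
\]
Hence $U$ identifies $H^1_L(X)$ isometrically with the \emph{weighted} Hardy space $H^1_{\ll}(w\,d\nu)$ of $\ll$ on $(X,d,\nu)$, the weight being $w=h^{-1}$. The same map sends atoms to atoms: if $a$ is a $[\mu,h]$-atom on a ball $B$, then $\tilde a:=a/h$ satisfies $\supp\tilde a\subseteq B$, and by \eqref{at1b} one has $\int\tilde a\,d\nu=\int a\,h\,d\mu=0$, while \eqref{at1a} reads $\norm{\tilde a}_{L^2(w\,d\nu)}=\norm{a}_{L^2(h^{-1}\mu)}\le\mu_h(B)^{-1/2}=(w\nu)(B)^{-1/2}$, using $w\,d\nu=d\mu_h$. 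So $\tilde a$ is a weighted $(1,2)$-atom for $(X,d,\nu,w)$, and $U$ identifies $H^1_{at}[\mu,h]$ with the weighted atomic space $H^1_{at,w}(\nu)$.

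After this reduction the theorem becomes the weighted equivalence $\norm{g}_{H^1_{\ll}(w\,d\nu)}\simeq\norm{g}_{H^1_{at,w}(\nu)}$ for a semigroup satisfying \eqref{LG} and a weight $w=h^{-1}$ with $w\in A_{p_0}(\nu)$, $p_0\in(1,2]$; note $A_{p_0}(\nu)\subseteq A_2(\nu)$. I would prove this by the two classical inclusions. For $H^1_{at,w}\hookrightarrow H^1_{\ll}(w\,d\nu)$ it suffices to bound $\norm{M_{\ll}\tilde a}_{L^1(w\nu)}\le C$ uniformly over weighted atoms: on a fixed dilate $2B$ one uses the $L^2(w\nu)$-boundedness of $M_{\ll}$ (valid since $w\in A_2$) with Hölder's inequality and the size bound, while away from $2B$ one exploits the cancellation $\int\tilde a\,d\nu=0$ against the time-regularity of $\tt_t$ furnished by analyticity and \eqref{LG}, summing the Gaussian tails over dyadic annuli with the weight controlled by the $A_2$/reverse-Hölder properties of $w$. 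The reverse inclusion is obtained by passing to the square function $S_{\ll}$ and a weighted tent-space decomposition: under \eqref{LG} the maximal and square-function norms are comparable in $L^1(w\,d\nu)$, and the atomic decomposition of the weighted tent space $T^1(w\,d\nu)$, combined with the $\ll$-harmonicity of $\mathbf 1$, yields atoms with the $L^2(w\nu)$ size bound and the $\nu$-cancellation required above.

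I expect this second inclusion to be the main obstacle: producing the \emph{simple} weighted atoms with cancellation against the base measure $\nu$ (equivalently, against $h$ upstairs), with all constants depending only on the $A_{p_0}$-characteristic of $w$. This is precisely where $h^{-1}\in A_{p_0}(\mu_{h^2})$ is indispensable — it supplies the weighted $L^2$-boundedness of $M_{\ll}$ and $S_{\ll}$, the weighted tent-space machinery, and the reverse-Hölder inequalities needed to absorb $w$ in the off-diagonal estimates; the restriction $p_0\le 2$ is natural because both the normalisation \eqref{at1a} and the square-function argument are $L^2$-based, so one needs $w\in A_2(\nu)$. The most delicate point should be replacing the operator-adapted molecules of the general theory by the genuinely simple atoms of Definition \ref{atoms-h}, which is where assumption \ref{H3} enters beyond its role in setting up the Doob transform.
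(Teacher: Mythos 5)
Your proposal follows essentially the same route as the paper: the Doob transform reduces the statement to a weighted Hardy-space identity for a conservative semigroup with two-sided Gaussian bounds on $(X,d,\nu)$, the weighted operator-adapted atomic decomposition (Liu--Song, Song--Wu, via weighted tent spaces) is upgraded to atoms with genuine $\nu$-cancellation using the harmonicity of $\mathbf 1$ (the paper does this with the resolvent identity $(I+\ll)^{-1}\mathbf 1=\mathbf 1$), and the converse inclusion is proved atom by atom using kernel regularity and the $A_{p_0}$ condition summed over dyadic annuli. The only quibbles are that the off-diagonal estimate for an atom exploits the \emph{spatial} H\"older continuity of $y\mapsto\tt_t(x,y)$ (a consequence of the two-sided bounds, not of analyticity in $t$), and that the paper runs this step through the Littlewood--Paley function $G_{\ll}$ rather than $M_{\ll}$ --- neither affects the structure of the argument.
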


We would like to emphasize  that in Theorem \ref{main2} the semigroup $T_t$ does not need to satisfy \eqref{LG}. Hence \eqref{double-bounded-h}
is not necessarily valid so it can happen that $\inf h(x) = 0 $ or $\sup h(x) = \8$ (this is the case in many interesting examples).
Therefore Theorem \ref{main2} can be seen as a generalization of \eqref{e1.6} from \cite{Dziubanski_Preisner_Annali_2017}. However,
there is a small cost here, namely we change $L^\8$-type condition on size of atoms into weighted $L^2$-type condition.

Also, note that in \cite[Theorem 1]{Dziubanski_Preisner_Annali_2017} the result requires the following geometric assumption: for
every $x \in X$ the function $r \mapsto \nu(B(x,r))$ is a bijection on $(0,\8)$. Our approach does not require this condition.

Our proof of Theorem \ref{main2} uses strongly the Doob transform, see Subsection \ref{ssec_Doob}. More precisely, we can introduce
a new semigroup $\tt_t = \exp(-t\ll)$ by \eqref{Doob_semi} which acts on $(X,d,\nu)$, $d\nu(x) = h^2(x) d\mu(x)$ and satisfies \eqref{LG}
on this changed metric-measure space. Moreover,
$$H^1_{L }(X) \ni f \mapsto h^{-1}f \in H^1_{\ll,   h^{-1}}(X)$$
is an isometry between $H^1_{L}(X)$ (related to the measure $\mu$) and a weighted Hardy space $H^1_{\ll,   h^{-1}}(X)$ (related to the measure $\nu$).
 Therefore, we shall study weighted Hardy spaces for operators $\ll$ satisfying \eqref{LG} in Section \ref{sec_proof1} below.
  The proof of \eqref{e1.6} in \cite{Dziubanski_Preisner_Annali_2017} uses different methods to the ones used here.
  In \cite{Dziubanski_Preisner_Annali_2017} the key step is to use a theorem of Uchiyama \cite{Uchiyama}, which relies
  on the analysis of grand maximal function. Our approach is based on atomic decompositions for weighted tent spaces, see \cite{Russ,Song_Wu, Liu_Song}.

The second goal of this paper is to study the space of functions of bounded mean oscillation,
the dual of $H^1_{at}[\mu, h]$. By definition, a function $g$ is in $BMO[\mu,h] $ if
\eqx{
\norm{g}_{BMO[\mu,h]} := \sup_B \inf_c \eee{\mu_h(B)^{-1} \int_B \abs{g(x) - c\, h(x)}^2 h(x) d\mu(x) }^{1/2} <\8.
}
In a standard way, elements of $BMO[\mu,h] $ are classes $\set{{g+c\, h} \ : \ c\in \CC}$.
A natural analogue of the Fefferman-Stein duality result \cite{Fefferman_Stein} is the following:

 \settheoremtag{B}
\begin{theorem}\label{thmB}
Suppose that an operator $L$, its semigroup $T_t=\exp(-tL)$, and a function $h(x)$ satisfy the assumptions \ref{H3}--\ref{H4}.
 There exists $p_0\in (1,2]$ such that if $h^{-1}\in A_{p_0}(\mu_{h^2})$, then
$BMO[\mu,h]$ is the dual to the Hardy space $H^1_{at}[\mu, h]$.
\end{theorem}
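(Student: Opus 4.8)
The plan is to deduce Theorem~\ref{thmB} from a weighted Fefferman--Stein duality on the Doob-transformed space, in parallel with the reduction used for Theorem~\ref{main2}. Recall from Subsection~\ref{ssec_Doob} that the Doob transform $\tt_t g = h^{-1}T_t(hg)$ yields a semigroup on $(X,d,\nu)$, $d\nu=h^2\,d\mu$, satisfying \eqref{LG}, and that $f\mapsto h^{-1}f$ is an isometry of $H^1_L(X)$ onto the weighted Hardy space $H^1_{\ll,\,h^{-1}}(X)$. Writing $w=h^{-1}$, I would first record the dictionary induced by this transform. A direct computation shows that $a\mapsto b:=h^{-1}a$ sends the $[\mu,h]$-atoms of Definition~\ref{atoms-h} to functions $b$ with $\supp b\subseteq B$, $\int b\,d\nu=0$, and $\norm{b}_{L^2(w\,d\nu)}\leq \nu_w(B)^{-1/2}$, where $\nu_w(B):=\int_B w\,d\nu=\mu_h(B)$; that is, to the natural $L^2(w\,d\nu)$-normalised atoms of a weighted atomic space $H^1_w(\nu)$, which by Theorem~\ref{main2} (carried through the isometry) coincides with $H^1_{\ll,\,h^{-1}}(X)$. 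Likewise, setting $G=h^{-1}g$ one checks
\[
\mu_h(B)^{-1}\int_B \abs{g-ch}^2 h\,d\mu=\nu_w(B)^{-1}\int_B \abs{G-c}^2 w^{-1}\,d\nu ,
\]
so $g\mapsto G$ is an isometry of $BMO[\mu,h]$ onto the weighted space $BMO_w(\nu)$ defined by the right-hand side, sending the class $\{g+ch\}$ to $\{G+c\}$, and the pairing transforms as $\int g\,a\,d\mu=\int G\,b\,d\nu$. Thus Theorem~\ref{thmB} is equivalent to the identity $\bigl(H^1_w(\nu)\bigr)^{\ast}=BMO_w(\nu)$.

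\textbf{Easy inclusion.} The embedding $BMO_w(\nu)\hookrightarrow \bigl(H^1_w(\nu)\bigr)^{\ast}$ I would obtain by Cauchy--Schwarz. For an atom $b$ and any constant $c$, cancellation gives $\int G\,b\,d\nu=\int_B (G-c)b\,d\nu$, and splitting the weight as $w^{-1/2}\cdot w^{1/2}$ yields $\abs{\int G\,b\,d\nu}\leq \eee{\int_B\abs{G-c}^2 w^{-1}\,d\nu}^{1/2}\norm{b}_{L^2(w\nu)}$; optimising over $c$ and using $\norm{b}_{L^2(w\nu)}\leq\nu_w(B)^{-1/2}$ bounds this by $\norm{G}_{BMO_w}$. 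Extending by linearity over finite atomic sums and taking the infimum over decompositions then controls the functional by $\norm{G}_{BMO_w}$. The delicate point here is the familiar $H^1$--$BMO$ well-definedness issue: since $G$ need not be bounded (indeed $h$ may be unbounded), one must verify that $\int G\,f\,d\nu$ is independent of the atomic decomposition of $f$. I would handle this in the standard way, first defining the functional on the dense subspace $H^1_w(\nu)\cap L^2(w\,d\nu)$, where the integral converges absolutely, and then extending by continuity.

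\textbf{Converse inclusion.} Given $\ell\in\bigl(H^1_w(\nu)\bigr)^{\ast}$, I would construct the representing $BMO_w$ function by a local Riesz-representation-and-patching argument. Fix a ball $B$ and regard $L^2(w\,d\nu)$ restricted to functions supported in $B$ as a Hilbert space; its mean-zero subspace $V_0(B)=\{f:\ \supp f\subseteq B,\ \int f\,d\nu=0\}$ consists, after normalisation, of multiples of atoms, so $\ell|_{V_0(B)}$ is bounded with norm $\leq\norm{\ell}\,\nu_w(B)^{1/2}$. Riesz representation then provides $\eta_B\in V_0(B)$ with $\ell(f)=\int_B f\,\eta_B\,w\,d\nu$ and $\norm{\eta_B}_{L^2(w\nu)}\leq\norm{\ell}\,\nu_w(B)^{1/2}$; set $G_B:=\eta_B\,w$. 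Testing against mean-zero $f$ supported in $B\subseteq B'$ shows that $G_B-G_{B'}$ is $\nu$-a.e.\ constant on $B$, so along an exhausting sequence of balls the local pieces glue, modulo one additive constant, into a single $G$ with $\ell(b)=\int G\,b\,d\nu$ for every atom. Since $G-c_B=\eta_B w$ on $B$ for the gluing constant $c_B$,
\[
\nu_w(B)^{-1}\int_B\abs{G-c_B}^2 w^{-1}\,d\nu=\nu_w(B)^{-1}\norm{\eta_B}_{L^2(w\nu)}^2\leq\norm{\ell}^2 ,
\]
so $\norm{G}_{BMO_w}\leq\norm{\ell}$. Transporting $G$ back through $g=hG$ produces the element of $BMO[\mu,h]$ representing $\ell$.

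\textbf{Main obstacle.} The genuinely technical work lies in two places. First, in the converse one must promote the identity $\ell(b)=\int G\,b\,d\nu$ from atoms to all of $H^1_w(\nu)$; as in the easy inclusion this rests on the density and continuity argument on $H^1_w(\nu)\cap L^2(w\,d\nu)$, and on ensuring the exhaustion is legitimate when $\nu(X)=\infty$. Second, and more conceptually, the hypothesis $h^{-1}\in A_{p_0}(\mu_{h^2})$ must be shown to be exactly what synchronises all the pieces: it guarantees $w\in L^1_{\mathrm{loc}}(\nu)$ (so that $\nu_w(B)$ and the $L^2(w\,d\nu)$-atoms are well defined), it makes $w\,d\nu$ doubling and compatible with $\nu$ (so that $BMO_w(\nu)$ is a genuine space, Hausdorff modulo constants), and, crucially, it is the precise condition under which Theorem~\ref{main2} identifies the weighted atomic space $H^1_w(\nu)$ with the maximal-function Hardy space $H^1_L(X)$ --- the step that upgrades the computation of an atomic dual into the duality for $H^1_L(X)$ asserted in Theorem~\ref{thmB}. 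I expect the careful bookkeeping of these weight-compatibility facts, rather than the Hilbert-space duality itself, to be the main hurdle.
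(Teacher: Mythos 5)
Your proposal is correct and is essentially the paper's own argument: the embedding $BMO[\mu,h]\hookrightarrow (H^1_{at}[\mu,h])^*$ via Cauchy--Schwarz after subtracting the optimal multiple of $h$ (the paper's Lemma~\ref{lem_dual}), and the converse via Riesz representation on the local mean-zero Hilbert spaces followed by gluing the representatives along an exhausting sequence of balls, exactly as in Section~\ref{sec_BMO}. The only difference is cosmetic: you first pass through the Doob transform and phrase everything as a weighted duality $(H^1_w(\nu))^*=BMO_w(\nu)$ with $w=h^{-1}$, $d\nu=h^2\,d\mu$, whereas the paper runs the identical Hilbert-space argument directly in the variables $(\mu,h)$; your dictionary between the two formulations is verified correctly, so the two proofs coincide under the isometry $f\mapsto h^{-1}f$.
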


The proof of Theorem \ref{thmB} and further details are discussed in Section \ref{sec_BMO}. The outline of the remainder of
 the paper is as follows. In Section  \ref{sec_prel} we recall some known facts on: Hardy spaces, the Doob transform,
 Gaussian estimates, and prove some preliminary results. In Section \ref{sec_proof1} we study the weighted Hardy spaces
 and the corresponding atomic decompositions. In Sections   \ref{sec_proof3} and \ref{sec_BMO} we prove our main results,
 Theorems  \ref{main2} and \ref{thmB}, respectively. In Section \ref{sec_examples} we provide several examples of operators that satisfy our assumptions.

\section{Preliminaries.}
\label{sec_prel}
\setcounter{equation}{0}

We now set notation and some common concepts that will be used throughout the course of the proofs. Let $(X,d,\mu)$ be
 a metric measure space endowed with a distance $d$ and a nonnegative Borel doubling measure $\mu$ on $X$.
 The operator $L$ is related to the semigroup $T_t$ on the space $(X,d,\mu)$, whereas $\ll$ is related to $\tt_t$ on $(X,d,\nu)$.
 The difference is that we always assume that $\ll$ satisfies \eqref{LG}, whereas $L$ satisfies more general condition \eqref{ULG'}.
 As a consequence $\w(x)$ from Proposition \ref{prop_existence} is the harmonic function
 for $\ll$ that is bounded from above and from below. However, the harmonic function $h$ related
 to $L$ is in general unbounded (either from above or from below). Finally, the letters $c,C$ are positive
 constants that may change from line to line. The notation $A\simeq B$ means that $C^{-1}A \leq B \leq CA$.

\subsection{Hardy spaces $H^1_L(X)$.}
\label{ssecH1}	
Let us start with giving a few definitions of the Hardy space $H^1_L(X)$ adapted to an operator $L$. At the end
all these definitions are the same Hardy space that we shall denote $H^1_L(X)$. In Subsection \ref{ssec11}
we already defined $H^1_L(X)=H^1_{L, \max}(X)$ by means of the
maximal function. Let us also recall
  the
following Lusin (area) function $S_{L}f$ and Littlewood-Paley function $G_{L}f$ associated to the
heat semigroup generated by $L$
\begin{equation}
S_{L}f(x):=\left(\iint_{d(x,y)< t}
|t^2Le^{-t^2L} f(y)|^2 \frac{d\mu(y)}{\mu(B(x,t))}{\frac{dt}{t}}\right)^{1/2},
\quad x\in X,
\label{e2.7}
\end{equation}
 and
\begin{equation}
G_{L}f(x):=\left(\int_0^{\infty}
|t^2Le^{-t^2L} f(x)|^2  {dt\over t}\right)^{1/2},
\quad x\in X.
\label{e2.8}
\end{equation}
 We define the Hardy space $H^1_{L, S}(X)$
 as the completion of $\{f\in L^2(\mu):  \|S_L f\|_{L^1(\mu)} < \infty \}$  in $L^1(\mu)$, see \cite[Theorem 4.7]{Auscher_McIntosh_Morris}, with respect to
 $L^1$-norm of the Lusin (area) function, i.e.
\begin{equation*}
\|f\|_{H^1_{L, S}(X)}=\|S_L f\|_{L^1(\mu)}.
\label{e2.9}
\end{equation*}
The space  $H^1_{L, G}(X)$ is defined analogously. Now, we shall {discuss} another approach to
 atomic decomposition of $H^1_L(X)$, which {works} in a more general context, but gives different
 (and in some sense more complicated) atoms. At this moment it is enough to make only assumptions from Subsection \ref{ssec11}.
Following \cite{Hofmann_Memoirs} let us define an {\it $L$-atom} $a$ as follows. Assume that there exists a ball $B=B(y_0,r) \subseteq X$
and a function $b \in \mathrm{Dom}(L)$ such that for $k=0,1$ we have:
\begin{eqnarray}\label{atom}
a=Lb,\qquad \supp \, L^k b \subseteq B, \qquad \norm{(r^2L)^k b }_{L^2(\mu)} \leq r^2 \mu(B)^{-1/2}.
\end{eqnarray}
Using $L$-atoms, one defines an atomic Hardy space $H^1_{L, {\rm {at}}}(X)$ as in \cite[Definition 2.2]{Hofmann_Memoirs}.
 In \cite[Theorem 7.1]{Hofmann_Memoirs} Hofmann et. al. proved that
\eq{\label{dif1}
\norm{f}_{H^1_{L, S}(X)} \simeq \norm{f}_{H^1_{L, {\rm {at}}}(X)} \leq C \norm{f}_{H^1_{L}(X)}.
}
Later, in \cite[Theorem 1.3]{Song_Yan_2018}, a complementary estimate was proved, namely
\eqx{\label{dif2}
\norm{f}_{H^1_{L }(X)} \leq C \norm{f}_{H^1_{L, {\rm at}}(X)}.
}
Moreover, results from {\cite[Theorem 1.2]{Hu}} imply that
\eqx{\label{dif3}
\norm{f}_{H^1_{L, S}(X)} \simeq \norm{f}_{H^1_{L, G}(X)}.
}
Therefore, all the definitions above lead to the same Hardy space that we shall denote
$$
H^1_L(X) := H^1_{L,max}(X) = H^1_{L, S}(X)=H^1_{L, G}(X)=H^1_{L, {\rm at}}(X).
$$
 Let us also mention that $H^1_L(X)$ has also
equivalent norms in terms of non-tangential maximal function and analogues with Poisson semigroup, see \cite{Hofmann_Memoirs, Song_Yan_2016, Song_Yan_2018}.

\begin{Rem} From \cite[Lemma 9.1]{Hofmann_Memoirs} it follows   that if the semigroup $T_t=\exp (-tL)$ related to an operator $L$
is conservative, i.e.,
$$
\int_X T_t(x,y) d\mu(y)=1, \quad t>0, \ x\in X,
$$
then for every $L$-atom  $a$ we have
$
\int_X a(x) d\mu(x)=0.
$
\end{Rem}

\subsection{Doob transform}\label{ssec_Doob}
In this section we describe one of the most important tools for this paper, i.e. the Doob transform (or $h$-transform),
see e.g. \cite{Gyrya-Saloff-Coste, Grigoryan_HeatKernels}. Assume that an operator $L$ related to a metric measure space
$(X,d,\mu)$ and a function {$h$} satisfy \ref{H3}--\ref{H4}. Notice that here we do { not} assume that $h$ is bounded
neither from above nor from below. See Section \ref{sec_examples} for examples.

On $(X,d)$ define a new measure $d\nu(x) = h^2(x) d\mu(x)$ and a new kernel
\eq{\label{Doob_semi}
\tt_t(x,y) = \frac{T_t(x,y)}{h(x)h(y)}.
}
By \ref{H1} the space $(X,d,\nu)$ satisfies the doubling condition. The inequalities from \ref{H4}  for $T_t$ are equivalent to
\eqref{LG} for $\tt_t$. The Doob transform is a simple multiplication operator
\eqx{
f\mapsto h^{-1}f.
}
Observe that
$$\norm{f}_{L^2(\mu)} = \norm{h^{-1}f}_{L^2(\nu)}$$
so the Doob transform is an isometry between these two $L^2$ spaces. Moreover, a simple calculation shows that $\tt_t$ is a semigroup
and its generator $\ll$ is also self-adjoint (as an image of $L$ under isometry). However, the Doob transform is not an isometry between
$L^1$ spaces but we still have $\norm{f}_{L^1(\mu)} = \norm{h^{-1}f}_{L^1(h^{-1} \nu)}.$

Recall now that $H^1_L(X)$ corresponds to the measure $\mu$, whereas $H^1_{\ll}(X)$ is defined with respect to $\nu=\mu_{h^2}$.
 A crucial observation in this paper
is  the following  proposition, where $H^1_{\ll, G, h^{-1}}(X)$ and $H^1_{\ll, max,h^{-1}}(X)$
 are {  weighted} Hardy spaces that we define in Section \ref{sec_proof1} below.
\prop{prop_Doob}{
Let $f\in L^1(\mu)$. Then
\alx{
\norm{f}_{H^1_{L, G}(X)} &= \norm{f}_{H^1_{\ll, G, h^{-1}}(X)},\\
\norm{f}_{H^1_{L, max}(X)} &= \norm{f}_{H^1_{\ll, max, h^{-1}}(X)}.\\
}
}
\begin{proof}
It is enough to notice that
\spx{
\int_X \eee{\int_0^\8 \abs{t^2LT_t f(x)}^2}^{1/2}\, d\mu(x) =\int_X \eee{\int_0^\8 \abs{t^2\ll\tt_t \eee{h^{-1}f}(x)}^2}^{1/2}\,  \frac{d\nu(x)}{h(x)}
}
and
\spx{
\int_X \sup_{t>0} \abs{\int_X T_t(x,y) f(y) d\mu(y)} d\mu(x)
=\int_X \sup_{t>0} \abs{\int_X \tt_t(x,y) \frac{f(y)}{h(y)} d\nu(y)} \frac{d\nu(x)}{h(x)}.
}
\end{proof}

The next statement essentially says that,
for the purpose  of our discussion here, assumption \ref{H3} is automatically fulfilled provided that  assumptions \ref{H1} and \ref{H4} are valid.

\begin{Prop}\label{rem_rem}
Assume that for a semigroup $T_t$ there exists a function $\wt{h}$ such that \ref{H1} and \ref{H4} are satisfied. Then, there exist
$C>0$ and a function $\w : X \to \RR$ such that $C^{-1} \leq \w(x) \leq C$  and for all $t>0$ we have
$$T_t(\w \wt{h}) (x) = \w \wt{h}(x), \quad \text{a.e. }x\in X.$$
\end{Prop}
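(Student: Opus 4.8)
The plan is to reduce the statement to Proposition \ref{prop_existence} by passing through the Doob transform associated with $\wt{h}$, and then to translate the conclusion back to $T_t$. First I would form the new measure $d\nu(x) = \wt{h}^2(x)\, d\mu(x)$ and the new kernel $\tt_t(x,y) = T_t(x,y)/(\wt{h}(x)\wt{h}(y))$, exactly as in \eqref{Doob_semi}. By assumption \ref{H1} the space $(X,d,\nu)$ is doubling, and, as observed in Subsection \ref{ssec_Doob}, the two-sided bounds \ref{H4} for $T_t$ are precisely the estimates \eqref{LG} for $\tt_t$ on $(X,d,\nu)$. Thus $\tt_t$ is a semigroup satisfying \eqref{LG} on a doubling space, which is exactly the hypothesis of Proposition \ref{prop_existence}.

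Second, I would apply Proposition \ref{prop_existence} to $\tt_t$. This produces a function $\w$ with $0<c\leq \w(x)\leq C$ that is $\ll$-harmonic, where $\ll$ is the generator of $\tt_t$; that is,
\[
\tt_t \w(x) = \int_X \tt_t(x,y)\,\w(y)\, d\nu(y) = \w(x), \qquad \text{a.e. } x\in X,\ t>0.
\]
This $\w$ is the candidate for the conclusion, and it satisfies the required two-sided bound $C^{-1}\leq \w(x)\leq C$ by construction.

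Finally, I would unwind the harmonicity of $\w$ for $\tt_t$ into the desired statement for $T_t$. Substituting $d\nu = \wt{h}^2\, d\mu$ and $\tt_t(x,y)=T_t(x,y)/(\wt{h}(x)\wt{h}(y))$ into the integral above gives
\[
\tt_t\w(x) = \frac{1}{\wt{h}(x)}\int_X T_t(x,y)\,\w(y)\wt{h}(y)\, d\mu(y) = \frac{1}{\wt{h}(x)}\,T_t(\w\wt{h})(x),
\]
so the identity $\tt_t\w=\w$ is equivalent to $T_t(\w\wt{h})=\w\wt{h}$, which is exactly the claim.

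Since the reduction is direct, there is no serious difficulty; the only point that needs a little care — and which I expect to be the main (minor) obstacle — is verifying that the integral defining $T_t(\w\wt{h})$ is absolutely convergent even when $\wt{h}$ is unbounded, so that the manipulation above is legitimate. This is guaranteed by \ref{H1} together with the upper Gaussian bound in \ref{H4}, which (as noted in the remark following the assumptions) already ensure that the action of $T_t$ on functions comparable to $\wt{h}$ is well defined; because $\w$ is bounded above and below, $\w\wt{h}$ is pointwise comparable to $\wt{h}$, and the same reasoning applies.
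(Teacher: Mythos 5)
Your argument is correct and is essentially identical to the paper's proof: both pass to the Doob-transformed semigroup $\tt_t(x,y)=T_t(x,y)/(\wt{h}(x)\wt{h}(y))$ on $(X,d,\nu)$, invoke Proposition \ref{prop_existence} to obtain a bounded-above-and-below $\ll$-harmonic function $\w$, and then unwind $\tt_t\w=\w$ into $T_t(\w\wt{h})=\w\wt{h}$. Your added remark on absolute convergence of the integral when $\wt{h}$ is unbounded is a sensible point of care that the paper handles only implicitly.
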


\begin{proof}
 Assume that $\wt{h}$ is such that \ref{H1} and \ref{H4} hold. Then, after the Doob transform the
 semigroup $\tt_t$ satisfies \eqref{LG} and we obtain $\w$ satisfying \eqref{double-bounded-h} and \eqref{L-harm1}, see
 Subsection \ref{ssec12}. In particular, for $t>0$,
 $$T_t(\w \wt{h})(x) =\wt{h}(x) \tt_t \w(x) = \wt{h}(x) \w(x).$$
 Thus, $h= \wt{h}\w$ is $L$-harmonic and still satisfies \ref{H1}--\ref{H4}.
\end{proof}

\subsection{Semigroups with two-sided Gaussian bounds.}\label{ssec23}
In this subsection we assume that $\tt_t(x,y)$ is a semigroup that satisfy \eqref{LG} on the space $(X,d,\nu)$. Then,
there exists a function $\w$, such that \eqref{double-bounded-h} and \eqref{L-harm1} are satisfied,
see {Proposition \ref{double-bounded-h} and} \cite[Sec. 2]{Dziubanski_Preisner_Annali_2017}.
It is well known that \eqref{LG} implies certain H\"older regularity
in the space variable for $T_t(x,y)$. For a~simple proof see \cite[Sec. 4]{Dziubanski_Preisner_Annali_2017}.

\prop{Holder}{\cite[Corollary 14]{Dziubanski_Preisner_Annali_2017}
Assume that the semigroup kernel $\tt_t(x,y)$ satisfies $\eqref{LG}$ and $\w$ is the related $\ll$-harmonic function.
 There exist $C,c,\delta>0$ such that
\eq{\label{Lips}
\abs{\frac{\tt_t(x,y)}{\w(x)\w(y)} - \frac{\tt_t(x,y_0)}{\w(x)\w(y_0)}} \leq C \eee{\frac{d(y,y_0)}{\st}}^\de \nu(B(x,\st))^{-1} \exp\eee{-\frac{d(x,y)^2}{ct}}
}
whenever $d(y,y_0)<\sqrt{t}$.
}
Let us remark that $\w(x) \simeq C$, so we could skip $\w(x)$ in \eqref{Lips}. However, we need to divide
by $\w(y)$ and $\w(y_0)$ to get H\"older-type inequality. Let us notice that Proposition~\ref{Holder} implies the following corollary.

\cor{uniqueness}{
If $\tt_t(x,y)$ satisfies \eqref{LG}, then $\w$ is (up to a constant) the unique bounded harmonic function.
}
\begin{proof}
Let $\wt{\w}$ be such that $\tt_t \wt{\w}(x) = \wt{\w}(x)$ and $|\wt{\w}(x)|\leq C$. From \eqref{Lips} for $\st>d(y_1,y_2)$ we have
\spx{
\abs{\frac{\wt{\w}(y_1)}{\w(y_1)} -\frac{\wt{\w}(y_2)}{\w(y_2)}}&=\abs{\frac{\tt_t\wt{\w}(y_1)}{\w(y_1)} -\frac{\tt_t\wt{\w}(y_2)}{\w(y_2)}}  \\
&\leq \int_X \abs{\frac{\tt_t (y_1,x)}{\w(y_1)} - \frac{\tt_t (y_2,x)}{\w(y_2)}} \wt{\w}(x) \, d\nu(x)\\
&\leq C \norm{\wt{\w}}_\8 \eee{\frac{d(y_1,y_2)}{\st}}^\de  \int_X \nu(B(x,\st))^{-1} \exp\eee{-\frac{d(x,y)^2}{ct}} \, d\nu(x)\\
&\leq C \norm{\wt{\w}}_\8 \eee{\frac{d(y_1,y_2))}{\st}}^\de.
}
Taking $t\to \8$ we arrive at $\wt{\w}/\w \equiv C$. This completes the proof of Corollary~\ref{uniqueness}.
\end{proof}

Let us state another consequence of Proposition \ref{Holder} that we shall use in Section \ref{sec_proof1}.

\prop{Lips2}{
Assume that $\tt_t(x,y)$ and $\delta,c>0$ are as in Proposition~\ref{Holder} and that $\kk_t(x,y)$ is the kernel
of the operator $ t\ll \exp(-t\ll)$. For $d(y,y_0)<\sqrt{t}$ we have
\eq{\label{Lips3}
\abs{\frac{\kk_t(x,y)}{\w(x)\w(y)} - \frac{\kk_t(x,y_0)}{\w(x)\w(y_0)}} \leq
C \eee{\frac{d(y,y_0)}{\st}}^\de \nu(B(x,\st))^{-1} \exp\eee{-\frac{d(x,y)^2}{2ct}}.
}
}
\pr{
By the self-improvement property of Gaussian estimates we have that
\eq{\label{self-imp}
\abs{\kk_t(x,z)} \leq C \nu(B(x,\st))^{-1} \exp\eee{-\frac{d(x,z)^2}{ct}},
}
see e.g. {\cite{Grigoryan_HeatOld} or \cite[Theorem 4.]{S2}}. Observe that
$$\kk_t(x,y) = 2 \int_X \kk_{t/2}(x,z) \tt_{t/2}(z,y)\, d\nu(z).$$
Next, by \eqref{Lips},
\spx{
&\hspace{-0.2cm}\abs{\frac{\kk_t(x,y)}{\w(x)\w(y)} - \frac{\kk_t(x,y_0)}{\w(x)\w(y_0)}} \\
&\leq C \int_X \abs{\kk_{t/2}(x,z)}
 \abs{\frac{\tt_{t/2}(z,y)}{\w(y)} - \frac{\tt_{t/2}(z,y_0)}{\w(y_0)}} \, d\nu(z)\\
&\leq C \eee{\frac{d(y,y_0)}{\st}}^\de \nu(B(x,\st))^{-1} \int_X \nu(B(z,\sqrt{t}))^{-1} \exp\eee{-\frac{d(x,z)^2
 + d(z,y)^2}{ct}} \, d\nu(z)\\
&\leq C \eee{\frac{d(y,y_0)}{\st}}^\de \nu(B(x,\st))^{-1} \exp\eee{-\frac{d(x,y)^2}{4ct}} \int_X \nu(B(z,\sqrt{t}))^{-1}
 e^{-\frac{d(z,y)^2}{2ct}}\, d\nu(z)\\
&\leq  C \eee{\frac{d(y,y_0)}{\st}}^\de \nu(B(x,\st))^{-1} \exp\eee{-\frac{d(x,y)^2}{4ct}}.
}
The proof of Proposition~\ref{Lips2}  is complete.
}

\section{Weighted Hardy spaces}
\label{sec_proof1}
\setcounter{equation}{0}

The theory of weighted Hardy spaces in $\Rd$ was studied in \cite{Garcia-Cuerva, Stromberg}. In the more general
context of spaces of homogeneous type the reader is referred to \cite{Yang_Yang, Liu_Song, Hu, Song_Yan_2016, Song_Yan_2018}
and references therein.

 \subsection{Muckenhoupt weights}
Recall that a  non-negative function  $w$  defined on $X$ is called a weight if it is   locally integrable.
We  denote by $\mu_w(A) = \int_A w(x) d\mu(x)$ the weighted measure, and
 by $\norm{f}_{L^p_w(\mu)} = (\int_X |f(x)|^p w(x)\, d\mu(x))^{1/p}$ the weighted $L^p$-norm.

We say that $w$ is in the Muckenhoupt class $A_p(\mu)$, $p>1$,  if   there is a constant $C$ such that
\begin{eqnarray}\label{weight} \hspace{2.8cm}
\left({1\over \mu(B)} \int_B w(x) d\mu(x)  \right)    \left({1\over \mu(B)} \int_B w(x)^{-1/(p-1)}(x)d\mu(x)\right)^{p-1} \leq C
\end{eqnarray}
holds for every ball $B\subset X$.
The class $A_1$ is defined replacing \eqref{weight} by
 \begin{eqnarray*}
 \|w^{-1}\chi_B\|_{\infty} \left({1\over \mu(B)} \int_B w(x) d\mu(x)  \right)  \leq C,
 \end{eqnarray*}
 where $\chi_B$ is the characterization function of the ball $B$. The class $A_{\infty}(\mu)$ is defined as the union of the $A_p(\mu)$ classes for
 $1\leq p<\infty,$ i.e.,
   $A_\8(\mu) = \bigcup_{p\geq 1}A_p(\mu)$.
In the sequel,   we shall use the following standard properties of $A_p(\mu)$ weights. For details we refer the reader to \cite{Garcia-Cuerva, Stein, Stromberg}.
\begin{Lem} \label{le2.1}
(i) If $p>1$ and $w\in A_p(\mu)$, then there exists $\e>0$ such that $w\in A_{p-\e}(\mu)$.

 (ii)
Assume that $p\geq 1$, $w\in A_p(\mu)$. There exists $C>0$ such that {for a ball $B$ and a measurable set} $E\subseteq B$ we have
 $$
\left( \frac{\mu(E)}{\mu(B)} \right) ^p \leq C  \frac{\mu_w(E)}{\mu_w(B)}.
 $$
 \end{Lem}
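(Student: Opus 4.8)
The plan is to establish (ii) first, since it follows directly from the definition \eqref{weight}, and then to deduce the self-improvement (i) from a reverse Hölder inequality for the dual weight.

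\emph{Part (ii).} I would argue by Hölder's inequality with exponents $p$ and $p'=p/(p-1)$. Writing $1=w^{1/p}w^{-1/p}$ on $E$ gives
\[ \mu(E) \leq \eee{\int_E w\, d\mu}^{1/p}\eee{\int_E w^{-1/(p-1)}d\mu}^{(p-1)/p} \leq \mu_w(E)^{1/p}\eee{\int_B w^{-1/(p-1)}d\mu}^{(p-1)/p}, \]
using $E\subseteq B$ in the last step. Rearranging \eqref{weight} yields $\int_B w^{-1/(p-1)}d\mu \leq C\,\mu(B)^{p/(p-1)}\mu_w(B)^{-1/(p-1)}$; substituting this bound and raising the resulting inequality to the power $p$ produces exactly $(\mu(E)/\mu(B))^p \leq C\,\mu_w(E)/\mu_w(B)$. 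The endpoint $p=1$ is even simpler: the $A_1(\mu)$ condition forces $w(x)\geq C^{-1}\mu_w(B)/\mu(B)$ for $\mu$-a.e.\ $x\in B$, and integrating this over $E$ gives the claim directly.

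\emph{Part (i).} Here I would pass to the dual weight $\sigma = w^{-1/(p-1)}$. A short computation from \eqref{weight} shows that $w\in A_p(\mu)$ is equivalent to $\sigma\in A_{p'}(\mu)$ with $p'=p/(p-1)$, so in particular $\sigma\in A_\infty(\mu)$. The essential input is then the reverse Hölder inequality for $A_\infty$ weights: there exist $q>1$ and $C>0$ such that
\[ \eee{\frac{1}{\mu(B)}\int_B \sigma^q\, d\mu}^{1/q} \leq C\,\frac{1}{\mu(B)}\int_B \sigma\, d\mu \]
for every ball $B$. Granting this, I would set $\e = (p-1)(q-1)/q>0$, chosen precisely so that $p-\e-1 = (p-1)/q$ and hence $w^{-1/(p-\e-1)} = \sigma^q$. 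The middle factor of the $A_{p-\e}(\mu)$ expression then reads $\bigl(\mu(B)^{-1}\int_B\sigma^q\,d\mu\bigr)^{p-\e-1}$, and bounding $\int_B\sigma^q$ by the reverse Hölder inequality turns it into a constant times $\bigl(\mu(B)^{-1}\int_B\sigma\,d\mu\bigr)^{q(p-\e-1)}$. Since $q(p-\e-1)=p-1$, multiplying by $\mu(B)^{-1}\int_B w\,d\mu$ reproduces exactly the original $A_p(\mu)$ quantity, which is bounded; thus $w\in A_{p-\e}(\mu)$, and $p-\e>1$ is automatic because $\e<p-1$.

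The main obstacle is the reverse Hölder inequality itself, which is precisely where the doubling hypothesis on $\mu$ is indispensable. I would obtain it by first proving the quantitative $A_\infty$ estimate $\mu_w(E)/\mu_w(B)\leq C\,(\mu(E)/\mu(B))^\eta$ for some $\eta>0$ (the natural companion to part (ii)), and then running a Calderón--Zygmund stopping-time decomposition adapted to the doubling measure to upgrade this into the $L^q$ self-improvement of $\sigma$. Since these are by now classical facts in the theory of Muckenhoupt weights on spaces of homogeneous type, I would cite \cite{Garcia-Cuerva, Stromberg} for the decomposition step rather than reproduce it in full.
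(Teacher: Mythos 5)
Your argument is correct. The paper does not actually prove Lemma \ref{le2.1} --- it simply records these as ``standard properties'' and refers to \cite{Garcia-Cuerva, Stein, Stromberg} --- so there is no in-paper proof to compare against; what you have written is precisely the classical argument from those references. Part (ii) is complete and elementary (the H\"older step, the rearrangement of \eqref{weight}, and the $p=1$ endpoint all check out), and part (i) correctly reduces the self-improvement to the reverse H\"older inequality for the dual weight $\sigma=w^{-1/(p-1)}$, with the exponent bookkeeping $\e=(p-1)(q-1)/q$ done correctly; deferring the reverse H\"older inequality itself to the cited literature is consistent with the level of detail the paper adopts for this lemma.
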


\subsection{Weighted Hardy spaces}\label{ssec32}
In this section the weight $w$ belongs to $A_\8(\mu)$ and $L,T_t$ are as in Subsection \ref{ssec11}.
Recall that
the
Lusin (area) function $S_{L}f$ and Littlewood-Paley function $G_{L}f$
  are given by \eqref{e2.7} and \eqref{e2.8}, respectively.
We define $H^1_{L, S, w} (X)$ as the completion of the set
$\{ f\in L^2(\mu): S_L f\in L^1_w(\mu)\}$ {in $L^1_w(\mu)$}, with respect to the norm
$$
\|f\|_{H^1_{L,  S, w} (X)}  =\|S_L f\|_{L^1_w(\mu)};
$$
The space $H^1_{L, G, w}(X)$ are defined analogously.
There are several results on the weighted Hardy {spaces} $H^1_{L, w}(X)$.
In \cite[Theorem 1.2]{Hu} it is proved that for $w\in A_\8(\mu)$,
\begin{eqnarray}\label{e3.8}
H^1_{L, S, w}(X) = H^1_{L, G, w}(X).
\end{eqnarray}

In \cite{Liu_Song, Song_Wu}  the authors proved a weighted version of \eqref{dif1}. 
Suppose that $M\in {\mathbb N}$ and $
w\in A_p,  {1  \leq p \leq  2}$, we say that a function $a\in L^2(\mu)$
 is called an $(L,  M, w)$-{\it atom}
 if there exists a ball $B = B(y_0,r)$ in $X$ and a function $b$ such that: $b\in \mathrm{Dom}(L^M)$ and for $k=0, 1, \dots, M$ we have
$$
\quad a=L^Mb, \quad \supp \, L^k b \subseteq B,\quad \norm{(r^2L)^k b }_{L^2_w(\mu)} \leq r^{2M} \mu_w(B)^{-1/2},
$$
c.f. \eqref{atom} for non-weighted $L$-atoms.

\defn{atoms-Lw}{  Suppose that $M\in {\mathbb N} $ and
$w\in A_p (\mu), {1  \leq p\leq 2}$.
A function $f$ belongs to $\HH^1_{L, at, M, w}(X)$ if
$
f = \sum_k \la_k a_k,
$
where $a_k$ are $(L,M, w)$-atoms, $\sum_k|\la_k| <\8$ and the series converges in $L^2(\mu)$. Define
$$\norm{f}_{H^1_{L, at, M, w}(X)} =\inf \sum_k |\la_k|,$$
where $f\in \HH^1_{L, at, M, w}(X)$ and $f$ is decomposed as above. Then $H^1_{L, at, M, w}(X)$ is defined as a completion
of $\HH^1_{L, at, M, w}(X)$ in the norm $\norm{\cdot}_{H^1_{L, at, M, w}(X)}$.
}

{Note that in Definition~\ref{atoms-Lw}, we define  $\HH^1_{L, at, M, w}(X)$ to be the normed space obtained by $L^2(\mu)$ convergence.
		This approach to the definition of adapted $H^1$ space was used in
		\cite{Hofmann_Memoirs, Hofmann_Mayboroda, Hofmann_Mayboroda_McIntosh, Liu_Song, Song_Wu}.
		For further discussion see \cite[p. 879 and Rem. 3.15]{Auscher_McIntosh_Morris}, \cite[Def. 3.4 and Theorem 3.5]{Hofmann_Mayboroda_McIntosh}, {and Section \ref{appendix2}}.
}
The following result was proved in \cite{Liu_Song} in the case $X=\Rd$ and in {\cite[Theorem 1.10]{Song_Wu}} when $X$ is a~space of homogeneous type.
\begin{Thm}\label{Liu_Song}
Assume that $(X,d,\mu)$ is a doubling metric-measure space and $T_t = \exp(-tL)$ is a semigroup satisfying \eqref{UG}.
Assume that $
w\in A_p(\mu)$, $1\leq p \leq 2$, and $M\in {\mathbb N}, M>(p-1)n/2$, where $n$ is as in \eqref{e1.2d}. Then
$$\norm{f}_{H^1_{L, S, w}(X)}  \simeq \norm{f}_{ H^1_{L, at, M, w}(X)}.$$
\end{Thm}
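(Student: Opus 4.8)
The plan is to prove the two norm inequalities separately, using the theory of weighted tent spaces as the bridge between the area-function and the atomic descriptions, in the spirit of the Coifman--Meyer--Stein and Hofmann--Mayboroda--McIntosh scheme adapted to the Muckenhoupt weight $w$. For the estimate $\norm{f}_{H^1_{L,at,M,w}(X)}\lesssim\norm{f}_{H^1_{L,S,w}(X)}$, I would begin with $f\in L^2(\mu)$ satisfying $S_Lf\in L^1_w(\mu)$ and lift it to $F(y,t)=t^2Le^{-t^2L}f(y)$. By the definition \eqref{e2.7} of $S_L$ one has $\norm{F}_{T^1_w}=\norm{S_Lf}_{L^1_w(\mu)}=\norm{f}_{H^1_{L,S,w}(X)}$, where $T^1_w$ denotes the weighted tent space. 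I would then invoke the atomic decomposition of $T^1_w$ from \cite{Russ,Song_Wu,Liu_Song} to write $F=\sum_j\la_jA_j$ with $\sum_j|\la_j|\lesssim\norm{F}_{T^1_w}$, where each $A_j$ is a tent-space atom supported in a tent $\widehat{B_j}$ over a ball $B_j=B(y_j,r_j)$ and normalised by $\iint_{\widehat{B_j}}|A_j(y,t)|^2\,\frac{d\mu(y)\,dt}{t}\leq\mu_w(B_j)^{-1}$.

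Next I would fix a Calder\'on reproducing formula $f=c\int_0^\8\Psi(t^2L)\,F(\cdot,t)\,\frac{dt}{t}$ in $L^2(\mu)$, where $\Psi(\la)=\la^M\widetilde\Psi(\la)$ is chosen with $\int_0^\8\Psi(s)\,se^{-s}\,\frac{ds}{s}=1$ and sufficient decay. Setting $a_j=c\int_0^\8\Psi(t^2L)\,A_j(\cdot,t)\,\frac{dt}{t}$ and $b_j=c\int_0^\8 t^{2M}\widetilde\Psi(t^2L)\,A_j(\cdot,t)\,\frac{dt}{t}$ gives $f=\sum_j\la_ja_j$ and $a_j=L^Mb_j$. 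It then remains to check that a fixed multiple of each $a_j$ is an $(L,M,w)$-atom over $CB_j$. The size bounds $\norm{(r_j^2L)^kb_j}_{L^2_w(\mu)}\lesssim r_j^{2M}\mu_w(B_j)^{-1/2}$ for $k=0,\dots,M$ I would obtain by duality, testing against $g\in L^2_w(\mu)$, using the weighted $L^2$ off-diagonal (Gaussian) bounds for $t^{2(M-k)}(t^2L)^k\widetilde\Psi(t^2L)$, and applying Cauchy--Schwarz against the tent-atom normalisation. The delicate point is the support condition $\supp(r_j^2L)^kb_j\subseteq CB_j$: since $A_j(\cdot,t)$ lives over $B_j$ only for $t\lesssim r_j$, I would realise $\widetilde\Psi(t^2L)$ through the finite-propagation-speed wave operator $\cos(s\sqrt L)$ --- available because \eqref{UG} yields Davies--Gaffney estimates --- choosing $\widetilde\Psi$ with compactly supported Fourier data, so that $\widetilde\Psi(t^2L)$ spreads mass only within distance $\sim t$ of the support of its argument.

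For the converse $\norm{f}_{H^1_{L,S,w}(X)}\lesssim\norm{f}_{H^1_{L,at,M,w}(X)}$, by the subadditivity of $S_L$ and the $L^2$-convergence of the atomic series it suffices to prove the uniform bound $\norm{S_La}_{L^1_w(\mu)}\lesssim1$ over all $(L,M,w)$-atoms $a=L^Mb$ attached to a ball $B=B(y_0,r)$. I would split $\int_XS_La\,w\,d\mu$ into a local part over $C_0B$ and a tail over the dyadic annuli $A_j=C_jB\setminus C_{j-1}B$ with $C_j=2^jC_0$. On $C_0B$, Cauchy--Schwarz gives $\mu_w(C_0B)^{1/2}\norm{S_La}_{L^2_w(\mu)}$; since $w\in A_p\subseteq A_2$ the operator $S_L$ is bounded on $L^2_w(\mu)$ and $\norm{a}_{L^2_w(\mu)}=r^{-2M}\norm{(r^2L)^Mb}_{L^2_w(\mu)}\leq\mu_w(B)^{-1/2}$, so the local part is $\lesssim1$ by the doubling of $\mu_w$. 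On each $A_j$ I would use $a=L^Mb$ to write $t^2Le^{-t^2L}a=t^2L^{M+1}e^{-t^2L}b$, whose kernel is bounded by $t^{-2M}\mu(B(x,t))^{-1}\exp(-d(x,y)^2/ct^2)$; combined with $\norm{b}_{L^1(\mu)}\leq\norm{b}_{L^2_w(\mu)}\eee{\int_Bw^{-1}\,d\mu}^{1/2}\lesssim r^{2M}\mu(B)\,\mu_w(B)^{-1}$ (H\"older together with $w\in A_2$), this yields the pointwise decay $S_La(x)\lesssim C_j^{-2M}\,\mu(C_jB)^{-1}\,\mu(B)\,\mu_w(B)^{-1}$ for $x\in A_j$.

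The step I expect to be the main obstacle is the convergence of this tail. Integrating the last bound against $w$ over $A_j$ gives $\int_{A_j}S_La\,w\,d\mu\lesssim C_j^{-2M}\,\frac{\mu_w(C_jB)}{\mu_w(B)}\,\frac{\mu(B)}{\mu(C_jB)}$, and the extra factor $\mu(B)/\mu(C_jB)\lesssim C_j^{-n}$, produced by the \emph{weighted} normalisation of $b$, is precisely what makes the estimate close. Using Lemma~\ref{le2.1}(ii) in the form $\mu_w(C_jB)/\mu_w(B)\lesssim(\mu(C_jB)/\mu(B))^p\lesssim C_j^{pn}$, the $j$-th term is $\lesssim C_j^{-2M+(p-1)n}$, and the series $\sum_jC_j^{-2M+(p-1)n}$ converges exactly under the hypothesis $M>(p-1)n/2$. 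Thus the whole difficulty is concentrated in making the off-diagonal decay rate explicit enough to beat the polynomial growth $C_j^{pn}$ of the weighted measure of the annuli, which is where the weight forces a stronger cancellation requirement on $M$ than the condition $M\geq1$ needed in the unweighted case.
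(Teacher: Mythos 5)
The paper does not prove this statement at all: Theorem~\ref{Liu_Song} is imported verbatim from the references \cite{Liu_Song} (for $X=\Rd$) and \cite{Song_Wu} (for spaces of homogeneous type), so there is no in-paper proof to compare against. Your outline is essentially a correct reconstruction of the argument in those references --- the weighted tent-space atomic decomposition plus finite propagation speed for the inclusion $H^1_{L,S,w}\subseteq H^1_{L,at,M,w}$, and the uniform bound $\norm{S_La}_{L^1_w(\mu)}\lesssim 1$ on atoms for the converse, with the annulus computation via Lemma~\ref{le2.1}(ii) correctly identifying where the threshold $M>(p-1)n/2$ enters.
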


Consequently, one may write $H^1_{L, at,   w}(X)$ in place of $H^1_{L, at, M, w}(X)$ when $
w\in A_p(\mu)$, $1\leq p\leq 2$, and $ M>(p-1)n/2$ as these spaces are all equivalent.
Having in mind \eqref{e3.8} and Theorem~\ref{Liu_Song}  we write
$$
H^1_{L, w}(X):=H^1_{L, S, w}(X)=H^1_{L, G, w}(X)=H^1_{L, at,   w}(X):=H^1_{L, at, M,  w}(X)
$$
for   $M>(p-1)n/2$.

Next, for an operator $L$ related to a metric measure space
 $(X,d,\mu)$ and a function $h(x)$ satisfy \ref{H3}--\ref{H4}, and we consider the semigroup $\tt_t$
 corresponding to the measure  $d\nu(x) = h^2(x) d\mu(x)$,  as in Subsection \ref{ssec_Doob}.
 By \ref{H1} the space $(X,d,\nu)$ satisfies the doubling condition. The inequalities \eqref{ULG'}
 for $T_t$ are equivalent to \eqref{LG} for $\tt_t$. Recall that,  $\tt_t$ is a semigroup and its
 generator $\ll$ is also self-adjoint, see Section~\ref{ssec_Doob}. As~in the above notation corresponding
 to the operator $L$ the  spaces $H^1_{\ll, S, w}(X)$, $H^1_{\ll, G, w}(X) $, and $H^1_{\ll, at, w}(X)$ related to $\ll$  are defined analogously
   and all these weighted Hardy  spaces coincide, i.e.
$$
H^1_{\ll, w}(X) = H^1_{\ll, S, w}(X)=H^1_{\ll, G, w}(X)=H^1_{\ll, at,     w}(X):=H^1_{\ll, at, M,  w}(X)
$$
when
$
w\in A_p, 1< p\leq 2$ and $M\in {\mathbb N}, M>(p-1)n/2$.

\subsection{Alternative  atomic characterization with cancellation  condition}
We shall prove atomic decompositions for $H^1_{\ll, w}(X)$
with natural and simple atoms related to $\nu$, $w$, and $\ll$-harmonic function $\w$. Recall that the existence of
$\w$ satisfying \eqref{double-bounded-h} follows from \eqref{LG}, see \cite[Sec. 2]{Dziubanski_Preisner_Annali_2017}.
\defn{atoms-hh}{
We call a function $a$ a $[\nu,\w,w]$-atom if there exists a ball $B$ such that:
\alx{
&\circ \quad \supp\, a \subseteq B,\\
&\circ \quad  \norm{a}_{L^2_w(\nu)} \leq \nu_w(B)^{-1/2},\\
&\circ \quad \int_B a(x) \w(x) d\nu(x) = 0.
}
Then, by definition, a function $f$ belongs to the atomic Hardy space $H^1_{at}[ \nu,  \w, w]$ if
$
{f = \sum_k \la_k a_k,}
$
where $a_k$ are $[\nu,\w,w]$-atoms, $\sum_k|\la_k| <\8$, and the series converges in $L^1_w(\nu)$. Moreover, for such representations the expression
$$\norm{f}_{H^1_{at}[ \nu,  \w, w]} = \inf \sum_k|\la_k|$$
defines a norm.
}
Observe that in Definition \ref{atoms-hh} the atoms satisfy $\norm{a}_{L^1_w(\nu)}\leq 1$, so the series $\sum_k \la_k a_k$
converges in $L^1_w(\nu)$-norm and a.e.\,. Moreover, the space $H^1_{at}[ \nu,  \w, w]$ is a Banach space.

The main result of this section is the following theorem. It states atomic characterization of the Hardy space $H^1_{\ll, w}(X)$.
Later, we shall deduce Theorem \ref{main2} from Theorem~\ref{main1} by using Doob's transform.
\thm{main1}{
Assume that $\ll$ satisfies \eqref{LG} and $\w$ is the associated bounded $\ll$-harmonic function, see \eqref{double-bounded-h}
and \eqref{L-harm1}. Let $p_0=(n+\delta)/n$, where $\delta$ is from Proposition \ref{Lips2}
and $n$ is as in \eqref{e1.2d}. If $w$ is a~weight in $A_{p_0}(\nu)$, then
$$H^1_{\ll, w} (X) =H^1_{at}[ \nu,  \w, w].
$$
}

\begin{proof}
Let $\w$ be the harmonic function for $\ll$, $C^{-1} \leq \w(x) \leq C$, see Subsection \ref{ssec23}.

{\bf Proof of $H^1_{\ll, w} (X) \subseteq  H^1_{at}[ \nu,  \w, w]$.}
Assume that $f\in H^1_{\ll, w}(X)=H^1_{\ll, at,   w}(X)$, see Subsection \ref{ssec32}. {For $p_0=1+\delta/n$
we have that $(p_0-1)n/2 = \delta/2 <1$ so we can take $M=1$ in Theorem \ref{Liu_Song}.}
 {We can} assume that $f$ is in a dense subspace {$\HH_{\ll,at, 1, w}(X)$}, so that we have $\la_k$
and {$(\ll, 1, w)$-atoms} $a_k$ as in Definition \ref{atoms-Lw} such that {$f=\sum_k\la_k a_k$}
 (convergence in $L^2(\nu)$ and in $L^1_w(\nu)$ and a.e.) and
$$
\norm{f}_{H^1_{\ll, w}(X)} \simeq \sum_k |\la_k|.
$$
Observe that {$(\ll, 1, w)$-atoms} satisfy localization and size condition of Definition \ref{atoms-hh}, so to prove that $a_k$
are $[\nu,\w,w]$-atoms we only need to show that
\eq{\label{cancel1}
\int_B a(x) \w(x) \, d\nu(x) = 0
}
for $a=a_k$. This will be enough since then $\norm{f}_{H^1_{at}[ \nu,  \w, w]} \leq C \norm{f}_{H^1_{\ll,w}(X)}$ on a dense subset of $H^1_{\ll,w}(X)$.

To prove \eqref{cancel1}, we follow the argument similar to \cite[Lemma 9.1]{Hofmann_Memoirs}. Recall that $\w$ is bounded and $\tt_t$
are uniformly bounded on $L^p(\nu)$, $1\leq p \leq \8$. By the functional calculus we have
$$(I+\ll)^{-1} = \int_0^\8 e^{-t} \tt_t dt$$
and, by \eqref{L-harm1},
\eq{\label{resolvent}
(I+\ll)^{-1} \w(x) = \w(x), \qquad \text{for a.e. } x.
}

Using \eqref{resolvent} twice,
\spx{
\int_B a(x) \w(x) \, d\nu(x) &= \int_B a(x) (I+\ll)^{-1}\w(x) \, d\nu(x)\\
&= \int_B (I+\ll)^{-1}\ll b (x) \w(x) \, d\nu(x)\\
&= \int_B (I+\ll)^{-1}(I+\ll)b (x) \w(x) \, d\nu(x) - \int_B (I+\ll)^{-1}b (x) \w(x) \, d\nu(x)\\
&= \int_B b (x) \w(x) \, d\nu(x) - \int_B b (x) (I+\ll)^{-1} \w(x) \, d\nu(x)\\
& = 0.
}
Let us notice that in the calculations above, we use that $a$ and $b$ have compact supports, $\w$ is bounded, $\tt_t$ has the upper Gaussian estimates, and
\eq{\label{unwei}
\norm{a}_{L^1(\nu)} \leq \norm{a}_{L^2_w(\nu)} \eee{\int_B w^{-1}}^{1/2} \leq \norm{a}_{L^2_w(\nu)} \frac{\nu(B)}{\nu_w(B)^{1/2}}<\8.
}
Here we have also used $A_2(\nu)$ condition for $w$. The same estimate holds for $b$.

{\bf Proof of $H^1_{at}[ \nu,  \w, w] \subseteq  H^1_{\ll, w} (X) $.}
First, let us show that for every  $[\nu,\w,w]$-atom $a$, there is a constant  $C$ independent of $a$ such that
\begin{eqnarray}\label{ggg}
\norm{G_{\ll} a}_{L^1_w(\nu)}\leq C,
\end{eqnarray}
where  $G_{\ll} f(x)= \eee{\int_0^\8 \abs{t^2\ll \tt_tf(x)}^2 \frac{dt}{t}}^{1/2}$.

Let $a$ be a $[\nu,\w,w]$-atom, so that $\supp\, a \subseteq B = B(y_0,r)$. Then, since $w\in A_2(\nu)$ and $G_{\ll}$ is bounded on $L^2_w(\nu)$,
$$\norm{G_{\ll} a}_{L^1_w(2B,\nu)}\leq \nu_w(2B)^{1/2} \norm{G_{\ll} a}_{L^2_w(\nu)} \leq \nu_w(2B)^{1/2} \norm{a}_{L^2_w(\nu)} \leq C.$$

Let $x\not\in 2B$ and $y\in B$. Then, $d(x,y) \simeq d(x,y_0) >r$. Let $\kk_{t^{2}}=t^2\ll \tt_{t^2}$ be as in Proposition \ref{Lips2}.
\spx{
G_{\ll} a(x)^2 &= \int_0^\8 \abs{\int_X \kk_{t^2} (x,y) a(y) \, d\nu(y)}^2  \frac{dt}{t} \\
&= \int_0^r + \int_r^\8 =: E_1+E_2.
}
Observe that
\sp{\label{measures}
\nu(B(x,t))^{-1} &= \frac{\nu(B(x,d(x,y_0)))}{\nu(B(x,t))}  \nu(B(x,d(x,y_0)))^{-1} \\
&\leq C \eee{1+\frac{d(x,y_0)}{t}}^n \nu(B(x,d(x,y_0)))^{-1},
}
where $n>0$ is the doubling dimension, see \eqref{e1.2d}.
In $E_1$ we have $t\leq r < d(x,y_0)$. Let $\delta>0$ be as in \eqref{Lips3}. Using \eqref{self-imp} and \eqref{measures},
 \spx{
 E_1 &\leq C \int_0^r\eee{ \int_B \nu(B(x,t))^{-1} \exp\eee{-\frac{d(x,y)^2}{ct^2}} |a(y)| d\nu(y)}^2  \frac{dt}{t}\\
 &\leq C \nu(B(x, d(x,y_0)))^{-2} \norm{a}^2_{L^1(\nu)} \int_0^r \eee{\frac{d(x,y_0)}{t}}^{2n} \exp\eee{-\frac{d(x,y_0)^2}{ct^2}} \frac{dt}{t}\\
  &\leq C \nu(B(x, d(x,y_0)))^{-2} \norm{a}^2_{L^1(\nu)} \int_0^r \eee{\frac{d(x,y_0)}{t}}^{-{2}\de}  \frac{dt}{t}\\
 &\leq C \frac{r^{{2}\delta}}{d(x,y_0)^{{2}\delta}} \nu(B(y_0, d(x,y_0)))^{-2}\norm{a}^2_{L^1(\nu)} .
 }
Recall that $\int a(x) \varphi(x) \, d\mu(x) =0$. For $E_2$ we note that  $d(y,y_0) < r$, so \eqref{Lips3} and \eqref{measures}
yield
\spx{
E_2 &\leq C \int_r^\8 \abs{ \int_B \eee{\frac{\kk_{t^2}(x,y)}{\w(y)} - \frac{\kk_{t^2}(x,y_0)}{\w(y_0)}}a(y) \w(y) d\nu(y)}^2 \frac{dt}{t}\\
 &\leq C\int_r^\8  \eee{\frac{r}{t}}^{2\delta}  \nu(B(x,t))^{-2} \abs{ \int_B  \exp\eee{-\frac{d(x,y)^2}{ct^2}} |a(y)| d\nu(y)}^2 \frac{dt}{t}\\
&\leq C r^{2\delta} \nu(B(x, d(x,y_0)))^{-2} \norm{a}^2_{L^1(\nu)} \int_0^{\8} \eee{1+\frac{d(x,y_0)}{t}}^{2n}
\exp\eee{-\frac{d(x,y_0)^2}{c\green{t^2}}} \frac{dt}{t^{1+2\de}}\\
&\leq C \frac{r^{2\delta}}{d(x,y_0)^{2\delta}} \nu(B(x, d(x,y_0)))^{-2} \norm{a}^2_{L^1(\nu)}.
}
Notice that $\delta \leq 1$ and $n\geq 1$ so   $p_0 = 1+n/\delta \leq 2$ and $w \in A_2(\nu)$. By the Cauchy-Schwarz inequality we have
$$\norm{a}_{L^1(\nu)} \leq \norm{a}_{L^2_w(\nu)} \eee{\int_B w^{-1}(x) d\nu(x)}^{1/2} \leq C \frac{\nu(B)}{\nu_w(B)}.$$
Summarizing the estimates above we arrive at
\spx{
G_{\ll} a(x) \leq C \frac{r^\de}{d(x,y_0)^\de} \nu(B(x,d(x,y_0))^{-1} \frac{\nu(B)}{\nu_w(B)}.
}
Denote $S_j(B) = 2^{j+1}B \setminus 2^{j}B$. If $x\in S_j(B)$ then $\nu(B(x,d(x,y_0))) \simeq \nu(2^jB)$ and
\sp{\label{GL1}
\norm{G_{\ll} a}_{L^1_w((2B)^c,\nu)}&= \sum_{j\geq 1} \int_{S_j(B)} G_{\ll} a(x) w(x)d\nu(x)\\
&\leq r^\de \frac{\nu(B)}{\nu_w(B)} \sum_{j\geq 1} \int_{S_j(B)} \frac{(2^j r)^{-\de}}{\nu(2^{j}B)}  w(x) d\nu(x) \\
&\leq \frac{\nu(B)}{\nu_w(B)} \sum_{j\geq 1} 2^{-j\de} \frac{\nu_w(2^{j}B)}{\nu(2^{j}B)}\\
&\leq C \sum_{j\geq 1} 2^{-j\de} \frac{\nu_w(2^{j}B)}{\nu_w(B)} \frac{\nu(B)}{\nu(2^j B)}\\
&\leq C \sum_{j\geq 1} 2^{-j\de} \eee{\frac{\nu(2^jB)}{\nu(B)}}^{p_1-1}\\
&\leq C \sum_j 2^{-j(\de -n(p_1-1))} \leq C,
}
whence \eqref{ggg} follows.
Here we have used the doubling condition and  Lemma~\ref{le2.1}(ii) for $w\in A_{p_1}(\nu)$, where $p_1<p_0=1+\de/n$.
Recall that $p_1<p_0$ can be chosen by the self-improvement property of $A_{p_0}(\nu)$, see  Lemma~\ref{le2.1}(i).

Now let $f\in  H^1_{at}[ \nu,  \w, w]$ and there is a sequence $(\lambda_j)_j$ in $\ell^1$ and a sequence $(a_j)_j$ of
$[ \nu,  \w, w]$-atoms such that $\sum_j \lambda_j a_j$ converges to $f$ in $L^1_w(\nu)$ with
$\sum_j |\la_j| \leq 2\|f\|_{H^1_{at}[ \nu,  \w, w]}$. So by \eqref{ggg} we have
$$
\left\|\sum_{j=1}^{l} \lambda_j a_j -\sum_{j=1}^{k} \lambda_j a_j \right\|_{H^1_{\ll, w} (X)}\leq \sum_{j=k+1}^l  |\lambda_j|
\norm{G_{\ll} a_j}_{L^1_w(\nu)}\leq C\sum_{j=k+1}^l  |\lambda_j|
$$
whenever $l>k>0.$  Then there exists $g$ in $H^1_{\ll, w} (X)$ such that $\sum_j \lambda_j a_j$ converges to
$g$ in $H^1_{\ll, w} (X)$.  By Theorem \ref{th7.1} from the Appendix we have that $H^1_{\ll, w} (X)=H^1_{\ll, S, w}(X)\subseteq L^1_w(X)$ and we have that $g\in H^1_{\ll, w} (X) \subseteq L^1_w(X).$
 Therefore,   $f=g\in H^1_{\ll, w} (X)$ with
$$
\|f\|_{H^1_{\ll, w} (X)}\leq C\lim\limits_{k\to \infty} \sum_{j=1}^k  |\lambda_j|
\norm{G_{\ll} a_j}_{L^1_w(\nu)}\leq C \sum_{j=1}^\8 |\la_j| \leq C\|f\|_{H^1_{at}[ \nu,  \w, w]}
$$
so
$H^1_{at}[ \nu,  \w, w] \subseteq  H^1_{\ll, w} (X) $. The  proof of Theorem~\ref{main1} is complete.
\end{proof}

\rem{rm_max}{
Under the assumptions of Theorem \ref{main1} there exists $C>0$ such that for every
$[ \nu,  \w, w]$-atom $a$ we have
$$\norm{M_\ll a}_{L^1_w(\nu)} \leq C,$$
where $C>0$ is independent of $a$.
}

\begin{proof}
Denote $M_{\ll} f(x) = \sup_{t>0}\abs{\tt_tf(x)}$. Similarly as before, it is enough to
show $\norm{M_{\ll} a}_{L^1_w(\nu)}\leq C$ with $C$ independent of $a$.

Let $a$ be a $[\nu,\w,w]$-atom, so that $\supp\, a \subseteq B = B(y_0,r)$. Then, since $w\in A_2(\nu)$ and $M_{\ll}$ is bounded on $L^2_w(\nu)$,
$$\norm{M_{\ll} a}_{L^1_w(2B,\nu)}\leq \nu_w(2B)^{1/2} \norm{M_{\ll} a}_{L^2_w(\nu)} \leq C \nu_w(2B)^{1/2} \norm{a}_{L^2_w(\nu)} \leq C.$$

Now, let $x\not\in 2B$, so that $d(x,y)\simeq d(x,y_0) \geq r$ for $y\in B$. We have
\spx{
M_{\ll} a(x)& \leq \sup_{t>r^2} \abs{\tt_t a(x)} + \sup_{t\leq r^2} \abs{\tt_t a(x)} =: E_1 + E_2.
}

Note that in in $E_1$ we have $t>r^2\geq d(y,y_0)^2$, so we can use \eqref{Lips}. Hence, by Definition \ref{atoms-hh},
 \spx{
 E_1 &= \sup_{t>r^2} \abs{\int_X \eee{\frac{\tt_t(x,y)}{\w(y)} - \frac{\tt_t(x,y_0)}{\w (y_0)}}a(y) \w(y) d\nu(y)}\\
&\leq  C \sup_{t>r^2} \int_X\eee{\frac{d(y,y_0)}{\st}}^\de \nu(B(x,\st))^{-1} \exp\eee{-\frac{d(x,y_0)^2}{c' t}} \abs{a(y)}  d\nu(y)\\
&\leq  C r^\de \norm{a}_{L^1(\nu)} \sup_{t>r^2} \eee{ t^{-\de/2}  \nu(B(x,\st))^{-1}  e^{-\frac{d(x,y_0)^2}{c't}}}.
 }
For $E_2$ we use \eqref{LG} for $\tt_t(x,y)$ getting
 \spx{
 E_2
 &\leq  C \sup_{t\leq r^2} \int_X \nu(B(x,\st))^{-1} \exp\eee{-\frac{d(x,y)^2}{c' t}} \abs{a(y)}  d\nu(y)\\
 &\leq  C r^\de \norm{a}_{L^1(\nu)} \sup_{t\leq r^2}\eee{ t^{-\de/2}  \nu(B(x,\st))^{-1}  e^{-\frac{d(x,y_0)^2}{c't}}}.
}
By joining these estimates we arrive at
\spx{
E_1+E_2 &\leq  C r^\de \norm{a}_{L^1(\nu)} \nu(B(x,d(x,y_0)))^{-1} \sup_{t>0}\eee{ t^{-\de/2}
 \frac{\nu(B(x,d(x,y_0)))}{ \nu(B(x,\st))}  e^{-\frac{d(x,y_0)^2}{c't}}}\\
&\leq  C r^\de \norm{a}_{L^1(\nu)} \nu(B(x,d(x,y_0)))^{-1} \sup_{t>0}\eee{ t^{-\de/2}  \eee{1+\frac{d(x,y_0)}{ \st}}^n  e^{-\frac{d(x,y_0)^2}{c't}}}\\
&\leq C \eee{\frac{r}{d(x,y_0)}}^\de \nu(B(y_0,d(x,y_0)))^{-1} \norm{a}_{L^1(\nu)}.
}

The rest of the proof goes exactly as for the Littlewood-Paley operator $G_{\ll}$, see \eqref{GL1}.

\end{proof}

\rem{rm_max2}{
Under the additional assumption $w\in A_1(\nu)$ it is possible to strengthen Remark \ref{rm_max} and prove that $H^1_{at}[\nu,\varphi,w] \subseteq H^1_{L,max,w}(X)$. The proof is based on weak-type boundedness of $M_\ll$ on the space $L^1_w(\nu)$, see for example \cite[Lemma 4.3]{Hofmann_Memoirs}.
}
The relation between $H^1_{at}[\nu,\varphi,w]$ and $H^1_{L,max,w}(X)$ stated in Remarks \ref{rm_max} and \ref{rm_max2} are not required in our main argument, so we will not investigate them further here.

\section{Proof of Theorem \ref{main2}   }
\label{sec_proof3}
\setcounter{equation}{0}

To prove Theorem \ref{main2}, we recall that  $T_t=\exp(-tL)$ and $h$ is the $L$-harmonic function for which \eqref{ULG'} holds. As usual, denote
$$
d\nu(x)  = h^2(x) d\mu(x),
$$
and
$$
\tt_t(x,y)  = \frac{T_t(x,y)}{h(x)h(y)}.
$$
Notice that
\ref{H3} means that $h(x)$ is harmonic for $T_t$. As a consequence
$$\int_X \tt_t(x,y) \, d\nu(y) =1, \quad x\in X,$$
so $\w \equiv 1$ is the harmonic function for $\ll$, see  Section~\ref{ssec_Doob}.

{\bf Proof of $ {H^1_L(X)} \subseteq  H^1_{at}[\mu , h].$}
Let $p_0=1+\de n^{-1}$, where $n$ is the dimension on the space of homogeneous type $(X,d,\nu)$ and $\de$
is the H\"older exponent for $\tt_t(x,y)$, see \eqref{Lips}.
Assume that $f\in H^1_L(X)$ or, equivalently, $\wt{f} :=h^{-1}f \in H^1_{\ll,h^{-1}}(X)$, see Section \ref{ssec_Doob}.
 From Theorem \ref{main1}, we have that $\wt{f} = \sum \la_k \wt{a}_k$, where $\sum_k|\la_k| \simeq \norm{\wt{f}}_{H^1_{\ll,h^{-1}}(X)}$
  and there {exist} balls $B_k$ such that:
\alx{
\supp\, \wt{a}_k \subseteq B_k, \qquad
\norm{\wt{a}_k}_{L^2_{h^{-1}}(\nu)} \leq \nu_{h^{-1}}(B_k)^{-1/2}, \qquad
\int \wt{a}_k(x) d\nu(x) = 0.
}
Then $${f = h \wt{f} = \sum_k \la_k a_k,}$$
where $a_k = h \wt{a}_k$. Obviously, $\supp\, a_k \subseteq B_k$ and  $\int a_k(x) h(x) d\mu(x)  =\int_B \wt{a}_k (x) d\nu(x)= 0$. Moreover,
$$\norm{a_k}_{L^2_{h^{-1}}(\mu)} = \norm{\wt{a}_k}_{L^2_{h^{-1}}(\nu)} \leq \nu_{h^{-1}}(B_k)^{-1/2} = \mu_{h}(B_k)^{-1/2}
$$
as desired.

{\bf Proof of $  H^1_{at}[\mu, h] \subseteq {H^1_L(X)}.$}
Note that if $a$ is {an} atom as in Theorem \ref{main2}, then for $\wt{a} = h^{-1} a$ we have
$$\int_B \wt{a}(x) \, d\nu(x) =0, \qquad \norm{\wt{a}_k}_{L^2_{h^{-1}}(\wt{\mu})} \leq \nu_{h^{-1}}(B_k)^{-1/2}. $$
By  Proposition \ref{prop_Doob} and Theorem \ref{main1},
\begin{eqnarray}\label{gggg}
\norm{a}_{H^1_L(X)} = \norm{h^{-1} a}_{H^1_{\ll, h^{-1}}(X)}\leq C
\end{eqnarray}
{ with a constant  $C$ independent of $a$. We then follow an argument as in Theorem~\ref{main1} to
obtain that $  H^1_{at}[\mu, h] \subseteq {H^1_L(X)}.$}
The proof of  Theorem~\ref{main2} is complete.

As a consequence of Theorem~\ref{main2}, we have the following result.

\begin{Cor}\label{corr}
Assume that $L$ and $h$ satisfy all the assumptions of Theorem \ref{main2}. If $f$ belongs to $H^1_L(X)$ and,
additionally, $h$ is in $L^\8(X)$, then
\eq{\label{cancel}
\int f(x) h(x)\, d\mu(x)=0.
}
\end{Cor}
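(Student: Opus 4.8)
The plan is to reduce the statement to the cancellation condition \eqref{at1b} that is built into every $[\mu,h]$-atom, using the atomic description of $H^1_L(X)$ furnished by Theorem \ref{main2}. First I would invoke the inclusion $H^1_L(X)\subseteq H^1_{at}[\mu,h]$ from Theorem \ref{main2} to decompose the given $f$ as $f=\sumk \la_k a_k$, where the $a_k$ are $[\mu,h]$-atoms and $\sumk|\la_k|<\8$. As recorded in the remark following Definition \ref{atoms-h}, each atom satisfies $\norm{a_k}_{L^1(\mu)}\leq 1$, so this series converges in $L^1(\mu)$; in particular $f\in L^1(\mu)$, and since now $h\in L^\8(X)$ the product $fh$ belongs to $L^1(\mu)$ and the integral in \eqref{cancel} is well defined.

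The key step is to integrate the decomposition term by term against $h$. Multiplication by the bounded function $h$ is a continuous operator on $L^1(\mu)$, so the partial sums $\sum_{k=1}^{N}\la_k a_k h$ converge to $fh$ in $L^1(\mu)$; as integration is a continuous functional on $L^1(\mu)$, this yields
$$\int f(x) h(x)\, d\mu(x)=\lim_{N\to\8}\sum_{k=1}^{N} \la_k \int a_k(x) h(x)\, d\mu(x).$$
Each integral on the right vanishes by the atomic cancellation \eqref{at1b}, and therefore the left-hand side is $0$, which is precisely \eqref{cancel}.

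The only point that genuinely requires care — and which I regard as the sole (mild) obstacle — is the justification of this interchange of summation and integration. Since the cancellation holds atom by atom but need not survive for arbitrary partial rearrangements, one cannot argue naively; what saves the computation is precisely the combination of the $L^1(\mu)$-convergence of the atomic series (guaranteed after Definition \ref{atoms-h}) with the hypothesis $h\in L^\8(X)$, the latter being exactly where the additional boundedness assumption on $h$ enters. I would note in passing that only the decomposition half of Theorem \ref{main2}, and not the full norm equivalence, is actually needed for this argument.
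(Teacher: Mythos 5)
Your proposal is correct and follows essentially the same route as the paper: decompose $f$ into $[\mu,h]$-atoms via Theorem \ref{main2}, note that the partial sums converge to $f$ in $L^1(\mu)$, and use $h\in L^\8(X)$ to pass the integration against $h$ to the limit, where each term vanishes by the atomic cancellation \eqref{at1b}. The justification you single out (continuity of multiplication by a bounded function and of integration on $L^1(\mu)$) is exactly what the paper's brief argument implicitly relies on.
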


 \begin{proof}
Let $T_t$,be the semigroup related to $L$. By Theorem \ref{main2} we have $f = \sum_k \la_k a_k$,
where $\sum_k|\la_k|<\8$ and $a_k(x)$ are $[\mu,h]$-atoms. Define
$$f_N(x) =\sum_{k=1}^N \la_k a_k(x).$$
Obviously, $f_N \to f$ in $L^1(\mu)$ and $\int_X f_N(x) h(x) \, d\mu(x)=0$ since $f_N \in L^1(\mu)$ and $h\in L^\8$. It follows that
\eqx{\label{cancc}
\int_X f(x) h(x)\, d\mu(x) =0.
}
\end{proof}

\rem{rem_1234}
{
The assumption that $h$ is bounded is necessary in Corollary \ref{corr}. If $h$ is unbounded then \eqref{cancel}
does not need to hold (or even the integral is not well defined). See Section \ref{sec_examples} for examples.
}

\section{Proof of Theorem \ref{thmB}}\label{sec_BMO}
\setcounter{equation}{0}

We start our discussion with the following lemma.
\lem{lem_dual}{
If $f\in H^1_{at}[\mu, h] $  and $g\in BMO[\mu,h],$ then the {pairing $\langle f, g \rangle$} can be defined and satisfies
\eqx{
\abs{\langle f, g \rangle} \leq C \norm{f}_{H^1_{at}[\mu, h]} \norm{g}_{BMO[\mu,h] }.
}
}

\begin{proof}
Let $a $ be {a} $[\mu, h]$-atom. Obviously, the integral $\int a(x) g(x) d\mu(x)$ does not depend on $c$ when {$g = g_1 + ch$}. Moreover,
\spx{
\abs{\int_X a(x) g(x) d\mu(x)} &\leq \int_X |a(x)| \abs{g(x)-ch(x)} d\mu(x)\\
&= \int_X |a(x)|h(x)^{-1/2} \abs{g(x)-ch(x)} h(x)^{1/2} d\mu(x)\\
&\leq \norm{a}_{L^2_{h^{-1}}(\mu)} \eee{\int_B \abs{g(x) -c h(x)}^2 h(x) d\mu(x)}^{1/2}\\
&\leq C \norm{g}_{\BMO}.
}
Therefore, $\int_X \sum_{j=1}^k \la_j a_j(x) g(x) d\mu(x)$ is a Cauchy sequence and we define
the {pairing $\langle f, g \rangle$} as its limit for {an} arbitrary $f\in H^1_{at}[\mu,h]$.
 \end{proof}

 \medskip

\begin{proof}[Proof of Theorem \ref{thmB}]
   By Lemma~\ref{lem_dual}, it follows that if $g\in \BMO$,  then
$$l_g(f) = {\langle f, g \rangle}$$
is a linear bounded functional on $H^1_{at}[\mu, h]$ with norm at most $C\norm{g}_{\BMO}$.

On the other {hand} let $l$ be a linear functional on $H^1_{at}[\mu, h]$. Without {loss} of generality
we assume that $\norm{l}_{H^1_{at}[\mu, h] \to \CC} \leq 1$.
For fixed $B$ let us define the Hilbert space
$$\hh_B = \set{f \in L^2_{h^{-1}}(\mu|_B) \ : \ \int_B f(x) h(x) d\mu(x) = 0}.$$
Obviously, if $f\in \hh_B$, then $f\in  H^1_{at}[\mu, h]$ with
$$\norm{f}_{H^1_{at}[\mu, h]} \leq \mu_h(B)^{1/2} \norm{f}_{\LLLL}$$
Therefore, by the Riesz representation theorem there is $\wt{\wt{g}}_B$ (defined up to $c h^2(x) \chi_B(x)$) such that
$$l_B(f) = \int_B f(x)  \wt{\wt{g}}(x)_B h^{-1}(x) \, d\mu(x)$$
and
$$\norm{\wt{\wt{g}}_B}_{\LLLL}=\norm{l_B}_{\hh_B \to \CC} \leq \mu_h(B)^{1/2}.$$
Let $c_B$ be a constant chosen so that for the function $\wt{g}_B = \wt{\wt{g}}_B +c_B h^2$ we have $\int_{B_0} \wt{g}_B d\mu = 0$ on some fixed ball $B_0$.
Take {an} increasing family $B_0 \subseteq B_1 \subseteq ...$ of balls {such that $\bigcup_{n\in \NN} B_n = X$}.
Since $\wt{g}_{B_n}$ agrees with $\wt{g}_{B_{n+1}}$ on $B_n$
we have that $\wt{g}_{B_n} - \wt{g}_{B_{n+1}} = c_n h^2$ on $B_n \supseteq B_0$. But the left hand side {has zero integral} on $B_0$, so $c_n=0$.
Define {$g_B = h^{-1} \wt{g}_B$} and
$$g(x) = h^{-1}(x) \lim_{n\to \infty} \wt{g}_{{B_n}}(x).$$
Notice that the limit exists, and $g$ coincides with $g_B$ on a ball $B$. Finally,
\spx{
\eee{\frac{1}{\mu_h(B)} \int_B \abs{g(x)-c_B h(x)}^2 h(x) d\mu(x)}^{\frac{1}{2}} &= \eee{\frac{1}{\mu_h(B)}
\int_B \abs{\wt{\wt{g}}_B(x)h^{-1}(x)}^2 h(x) d\mu(x)}^{\frac{1}{2}}\\
&= \eee{\frac{1}{\mu_h(B)} \norm{\wt{\wt{g}}_B}_{\LLLL}^2 }^{\frac{1}{2}} \leq C.
}
This proves that $g\in \BMO$ and $\norm{g}_{\BMO} \leq C$. Also, $l(f) = \int_X f(x) g(x) d\mu(x)$ whenever $f$ is a finite
 combination of atoms. This ends the second part of the proof.
\end{proof}

\section{Applications}
\label{sec_examples}
\setcounter{equation}{0}

As an illustration of our results we shall discuss several examples. Our main results, Theorems~\ref{main2}
and \ref{thmB}
  can be applied to a wide range of operators  such as:  operators with Dirichlet  boundary
	conditions on some domains in $\Rd$, Schr\"odinger operators, and Bessel operators.

For further references let us notice here that the assumption  $h^{-1} \in A_p(\mu_{h^2})$ from Theorem \ref{main2} is equivalent to
\eq{\label{Apw}
\sup_B \frac{h(B)}{h^2(B)} \eee{\frac{h^{2+\frac{1}{p-1}}(B)}{h^2(B)}}^{p-1} \leq C,
}
where $B$ is a ball, $p>1$, and $h^q(B)=\mu_{h^q}(B)$ for $q>0$.

\subsection{Dirichlet Laplacian on $\Omega\subset {\mathbb R}^n$ }\label{ssec61} \
 One of the main motivations for the present paper is the description of the Hardy spaces corresponding to  the Dirichlet Laplacian.
We believe that the applications of our approach  which we describe in Theorems
 \ref{thm_Dir1} and \ref{thm_Dir2} below provide an illuminating way of understanding the results concerning Dirichlet Laplace operator
 obtained by Auscher, Russ, Chang, Krantz and Stein
 in
 \cite{Auscher_Russ, Chang_Krantz_Stein}.
 Assume that a domain $\Omega$
 (an open and connected subset) in $\Rd$ is given. By $\Delta_\Omega$ we will denote the  Laplace  operator with
 the Dirichlet boundary conditions defined on  $\Omega$. We shall consider two particular classes of the set $\Omega$ described
 in Examples 1.1 and 1.2 below.

\subsubsection{Example 1.1: The domain above the graph  of a bounded $C^{1,1}$ function.}
Assume that $\Gamma:\RR^{n-1} \to \RR$ is such that:
\al{
\label{llli1}
&\abs{\nabla \Gamma (x)} \leq C_1,\\
\label{llli2}
&\abs{\nabla \Gamma(x) - \nabla \Gamma (y)} \leq C_2 |x-y|
}
and consider the following domain in ${\mathbb R}^n, n\geq 3$,
\eq{\label{Ome}
\Omega = \set{x\in \Rd \ : \ x_n > \Gamma(x_1,...,x_{n-1})},
}
i.e. the region above the graph  of a bounded $C^{1,1}$ function $ \Gamma.$ One of the main applications of our results is the following theorem.
\thm{thm_Dir1}{
Assume that $\Omega$ is as in \eqref{Ome}, where $\Gamma$ is bounded and satisfies \eqref{llli1}--\eqref{llli2}.
Then there exists a function $h:\Omega \to (0,\8)$, such that
$$h(x) \simeq \mathrm{dist}(x, \Omega^c)$$
and the Hardy space $H^1_{\Delta_\Omega}(\Omega)$ coincides with $H^1_{at}[\mu, h]$, where $\mu$ is the Lebesgue measure on $\Omega$.
}
Theorem \ref{thm_Dir1} is a direct consequence of Theorem \ref{main2}, Proposition \ref{rem_rem},
and Lemma \ref{prop_lip_dom} below. Let us first recall that
  the estimates on the heat kernel $T_t(x,y)$  for the Dirichlet Laplacian $\Delta_{\Omega,D}$ on $\Omega$
  were given in \cite{Song_R_above_Lipshitz}. It was shown there that
 \begin{eqnarray}\label{e6.1}
  T_t(x,y)
\geq C \left({\rho(x)\rho(y)\over t}\wedge 1 \right)  t^{-n/2}   \exp\eee{-\frac{|x-y|^2}{c_1t}},
\end{eqnarray}
and
 \begin{eqnarray}\label{e6.11}
  T_t(x,y)
\leq C \left({\rho(x)\rho(y)\over t}\wedge 1 \right)  t^{-n/2}   \exp\eee{-\frac{|x-y|^2}{c_2t}},
\end{eqnarray}
uniformly for $x, y\in \Omega$ and $t>0$. Here $a\wedge b=\min\{a, b\}$ and $\rho(x) = \mathrm{dist}(x,\Omega^c)$
is the distance between $x$ and $\partial \Omega.$

\lem{prop_lip_dom}{
Let $\Omega$ be a domain given by a bounded $C^{1,1}$ function $\Gamma$, see \eqref{llli1}--\eqref{llli2}.
Then, the function {$\wt{h} = \rho$} defined on $\Omega$ satisfies \ref{H1}--\ref{H4}. Moreover,
for $p>1$ we have $\wt{h}^{-1} \in A_p(\nu)$, where $d\nu(x) = \wt{h}^2(x) dx$ on $\Omega$.
}
\begin{proof} From \eqref{e6.1} and \eqref{e6.11}, we see that
\eq{\label{two-Omega}
\frac{C^{-1}}{t+\rho(x)\rho(y)} t^{-n/2} \exp\eee{-\frac{|x-y|^2}{c_1t}} \leq \frac{T_t(x,y)}{\rho(x)\rho(y)}
\leq  \frac{C}{t+\rho(x)\rho(y)} t^{-n/2} \exp\eee{-\frac{|x-y|^2}{c_2t}}.
}
First, we claim that
\eq{\label{measss}
\nu(B(x,r)) \simeq r^n (r+\rho(x))^2.
}
To prove the claim observe that for $y\in B(x,r)$ we have $\rho(y) \leq \rho(x)+r$, which immediately gives the upper bound.
To see the lower bound recall that $C_1$ is the constant from \eqref{llli1} and consider the set
$$S = B(x,r) \cap \set{y\in \Rd \ : \ y_n \geq  x_n+r/2+C_1|(x_1,...x_{n-1})-(y_1,...,y_{n-1})|}.$$
Observe that $|S|\simeq r^n$ and $S\subseteq \Omega$. Moreover, if $y\in S$ then $\rho(y) \simeq (r+\rho(x))$ and, consequently
we get the lower estimate from \eqref{measss}.

The doubling condition \ref{H1} for $(\Omega, \rho^2(x) dx)$ follows from \eqref{measss}. Moreover, \ref{H4}
is a consequence of \eqref{two-Omega}, \eqref{measss} and the estimate
$$\frac{C^{-1}}{\max(\nu(B(x,\st)),\nu(B(y,\st)))}\leq \frac{t^{-n/2}}{t+\rho(x)\rho(y)} \leq \frac{C}{ \min(\nu(B(x,\st)),\nu(B(y,\st)))}. $$

Similarly to \eqref{measss} we can prove that for $q>0$ we have
\eq{\label{meassss}
h^q (B(x,r)) \simeq r^n (r+h(x))^q,
}
where $h^q(B)$ is the measure with the density $h^q(x) \, dx$ on $\Omega$. Then $h^{-1} \in A_p(\nu)$ for all $p>1$
 follows from \eqref{meassss} and \eqref{Apw}.
\end{proof}

\subsubsection{Example 1.2: Exterior domain outside bounded convex $C^{1,1}$ set.}

Assume that $\Omega\subset {\mathbb R}^n$ is the  exterior of a $C^{1,1}$ compact convex domain, which means that $\Omega^c$
is convex, bounded, and its boundary is locally a $C^{1,1}$ function, see \eqref{llli1}--\eqref{llli2}.
\thm{thm_Dir2}{
Assume that $\Omega$ is the  exterior of a $C^{1,1}$ compact convex domain with boundary that is locally $C^{1,1}$,
see \eqref{llli1}--\eqref{llli2}. Then there exists a function $h:\Omega \to (0,\8)$, such that
$$h(x) \simeq \min(1, \mathrm{dist}(x, \Omega^c))$$
and the Hardy space $H^1_{\Delta_\Omega}(\Omega)$ coincides with $H^1_{at}[\mu, h]$, where $\mu$ is the Lebesgue measure on $\Omega$.
}
{Theorem \ref{thm_Dir2}} is a direct consequence of Theorem \ref{main2}, Proposition \ref{rem_rem},
and Lemma \ref{prop_Dir_h} below. In \cite{Zhang}  the following estimates were proven on the heat kernel $T_t(x,y)$  for the Dirichlet {Laplacian:}
 \begin{eqnarray}\label{e6.2}
 T_t(x,y)
\geq C_1 \left({\rho(x) \over \st \wedge 1}\wedge 1 \right) \left({\rho(y) \over \st \wedge 1}\wedge 1 \right) t^{-n/2}  \exp\eee{-\frac{|x-y|^2}{c_1t}},
\end{eqnarray}
and
\begin{eqnarray}\label{e6.22}
  T_t(x,y)
\leq C_2 \left({\rho(x) \over \st \wedge 1}\wedge 1 \right) \left({\rho(y) \over \st \wedge 1}\wedge 1 \right)  t^{-n/2}  \exp\eee{-\frac{|x-y|^2}{c_2t}},
\end{eqnarray}
uniformly for $x, y\in \Omega$ and $t>0$, where $\rho(x)= \mathrm{dist}(x,\Omega^c)$. For $x\in \Omega$ define
\eq{\label{homega}
\wt{h}(x) = \min(1, \rho(x)).
}
\lem{lem_hhh}{
On $\Omega$ denote the measure $\sigma_q$ that has the density $\wt{h}^q(x) dx$. Then
$$\sigma_q(B(x,r)) \simeq
\begin{cases}
r^n & \text{if } r\geq 1 \text{ or } \rho(x)\geq 1\\
r^n (r+\rho(x))^q & \text{if }r\leq 1 \text{ and } \rho(x)\leq 1
\end{cases}
.$$
In particular, for $\nu := \sigma_2$ we have
$$\nu(B(x,r)) \simeq
\begin{cases}
r^n & \text{if } r\geq 1 \text{ or } \rho(x)\geq 1\\
r^n (r+\rho(x))^2 & \text{if }r\leq 1 \text{ and } \rho(x)\leq 1
\end{cases}
.$$
}

\pr{[Sketch of the proof.] First, observe that since $\Omega^c$ is convex, then
$$|B(x,r)| = |\set{y\in \Omega \ : \ d(x,y)<r}| \simeq r^n.$$
Moreover, if $\rho(x) \geq 1$ or $r\geq 1$, then on substantial part (i.e. on the
 set with measure $\simeq r^n$) of the set $\Omega \cap B(x,r)$ the measure $\nu$ is just the Lebesgue measure.
 In the opposite case, i.e. $r\leq 1$, $\rho(x) \leq 1$ we are close to boundary and $\wt{h}(y) \simeq \rho(y)$.
 Then, the lemma follows by considering two cases: $\rho(x) \geq 2r$ and $\rho(x)\leq 2r$. The details are left to the reader.
}

\lem{prop_Dir_h}{
The function $\wt{h}$ from \eqref{homega} satisfies \ref{H1}--\ref{H4}. Moreover, if $d\nu(x) = \wt{h}^2(x) dx$
on $\Omega$ then for any $p>1$ we have $\wt{h}^{-1} \in A_p (\nu)$.
}

\pr{
Observe first, that from \eqref{Apw} and Lemma \ref{lem_hhh} we have that $\wt{h}(x)$ {satisfies} \ref{H1}
and the $A_p$ condition. Now, we shall show \eqref{ULG'} for $\wt{h}(x)$. The estimates \ref{H4} will
follow from \eqref{e6.2}--\eqref{e6.22} provided that we prove
\spx{\label{enough}
\frac{\left({\rho(x) \over \st \wedge 1}\wedge 1 \right)\left({\rho(y) \over \st \wedge 1}\wedge 1 \right)}
{(\rho(x)\wedge 1)(\rho(y)\wedge 1)}t^{-n/2} \leq \frac{C}{ \min(\nu(B(x,\st)),\nu(B(y,\st)))},\\
\frac{\left({\rho(x) \over \st \wedge 1}\wedge 1 \right)\left({\rho(y) \over \st \wedge 1}\wedge 1 \right)}
{(\rho(x)\wedge 1)(\rho(y)\wedge 1)}t^{-n/2} \geq \frac{C}{ \max(\nu(B(x,\st)),\nu(B(y,\st)))}.
}
Let us notice that we are proving gaussian-type estimates on a doubling space, so we are equally fine with
either $\nu(B(x,\st))$ or $\nu(B(y,\st))$. Recall that the estimates on $\nu(B(x,r))$ are given in Lemma \ref{lem_hhh}.
Obviously, when $t\geq 1$ there is nothing to prove, so let us assume that $t\leq 1$ and denote
$$W = \frac{\left({\rho(x) \over \st }\wedge 1 \right)\left({\rho(y) \over \st }\wedge 1 \right)}{(\rho(x)\wedge 1)(\rho(y)\wedge 1)}t^{-n/2}.$$
By symmetry we shall always consider $x,y$ such that $\rho(x) \leq \rho(y)$. We claim that
\eq{\label{eqqqq}
C^{-1} \nu(B(y,\st))^{-1} \leq W \leq C  \nu(B(x,\st))^{-1}.
}
The claim follows by a careful analysis of the cases:
\ite{
\item $\st\leq 1\leq \rho(x), \rho(y)$,
\item $\st\leq \rho(x)\leq 1\leq \rho(y)$,
\item $\st\leq \rho(x)\leq \rho(y)\leq 1$,
\item $\rho(x) \leq \st\leq 1\leq \rho(y)$,
\item $\rho(x) \leq \st \leq \rho(y)\leq 1$,
\item $\rho(x) \leq \rho(y)\leq \st \leq 1$.
}
{The proof of Lemma \ref{prop_Dir_h} will be finished when we prove the estimate \eqref{eqqqq} in each case.
This follows easily from Lemma \ref{lem_hhh}. Here we will only present one case and leave the others to the reader.
 Assume then that $\rho(x) \leq \st \leq \rho(y)\leq 1$. In this case we have
$$W \simeq \rho(y)^{-1} t^{-(n+1)/2}, \quad \nu(B(y,\st))^{-1} \simeq \rho(y)^{-2}t^{-n/2}, \quad \nu(B(x,\st))^{-1} \simeq t^{-(n+2)/2}$$
and \eqref{eqqqq} follows.
}
}

\begin{Rem} \label{rem_Dir} When  $\Omega$ is the upper-half space ${\mathbb R}^n_+= \set{x=(x', x_n)\in \Rd \ : \ x_n > 0}$,
it follows  by the reflection method, see for example \cite[(6) p. 57]{Strauss}, that
the  heat kernel $T_t(x,y)$  related  to Dirichlet Laplacian $\Delta_{{\mathbb R}^n_+, D}$  on ${\mathbb R}^n_+$
satisfies
\begin{eqnarray*}
  T_t(x,y) =
\frac{1}{ (4\pi t)^{\frac{n}{2}}}  e^{-\frac{|x'-y'|^2}{ 4t}}
\Big(e^{-\frac{|x_n-y_n|^2}{ 4t}} -
e^{-\frac{|x_n+y_n|^2}{ 4t}}\Big),
\label{e2dkern}
\end{eqnarray*}
 for $n \ge 2$. In this case,  the function $h(x)$ from Theorem~\ref{thm_Dir1} equals  $x_n$, see \cite[p. 6]{Gyrya-Saloff-Coste}.
\end{Rem}
{Notice that in for $\Omega = \RR_+^n$ as in Remark \ref{rem_Dir} the conclusion of Theorem \ref{main2}
describes the Hardy space by different atoms to the ones known from \cite{Chang_Krantz_Stein, Auscher_Russ}.
Here all the atoms are as in Definition \ref{atoms-h} with $h(x) = x_n$, whereas in \cite{Chang_Krantz_Stein, Auscher_Russ}
the atoms were different, see \cite[Proposition 1.5]{Chang_Krantz_Stein} and \cite[Theorem 1]{Auscher_Russ}. Nevertheless, all these definitions are equivalent that is  they  give different descriptions of the same space.

}

\begin{Rem} When  $\Omega$ is the  space ${\mathbb R}^n\backslash {\overline B(0, 1)}= \set{x\in \Rd \ : \sum_{i=1}^n x_i^2>1}$, it is known that
 $h(x)={\rm log}|x|$ if $n=2$ and $h(x)=1-|x|^{-n+2}$ if $n>2,$ see \cite[p. 6]{Gyrya-Saloff-Coste}.
\end{Rem}

\begin{Rem} Note that the statements of
Theorems \ref{thm_Dir1} and
\ref{thm_Dir2} essentially coincide saying that $H^1_{\Delta_\Omega}(\Omega)= H^1_{at}[\mu, h]$, where $h$
is the positive harmonic function equal to zero on the boundary of $\Omega$. Note however, that in Theorem \ref{thm_Dir1}
 the function $h$ is unbounded whereas $h\in L^\infty(\Omega)$ in Theorem \ref{thm_Dir2}. Hence in these two settings
 the nature of $H^1_{at}[\mu, h]$ is different in a way {described} in Corollary \ref{corr} and Remark.
\ref{rem_1234}.
\end{Rem}

\subsection{ Schr\"odinger operators} \
Consider $X=\Rd$ with the Lebesgue measure and the Schr\"odinger operators
$$L_V = -\Delta + V,$$
where $V\geq 0$ and $V\in L^1_{loc}(\Rd)$. Since we assume $V\geq 0$ then by the Feynmann-Kac formula we always have \eqref{UG}.
However, the semigroup kernel can be much smaller than classical heat kernel due to the influence of the potential. The Hardy spaces
for $H^1_{L_V}(\Rn)$ were intensively studied, see e.g. \cite{DZ_Revista2, DP_Potential, DZ_JFAA, DZ_Studia_DK, Hofmann_Memoirs}. It appears
that geometric conditions on atoms depend heavily on the dimension $n$ and size of the potential $V$. Let us recall two examples.

\subsubsection{Example 2.1: Potentials from a Kato class.}
Let $n\geq 3$. Then it is known that $T_t = \exp(-tL_{V})$ satisfies \eqref{LG} if and only if $V$ is such that
$$\Delta^{-1} V \in L^\8 (\Rd),$$
see \cite{DZ_JFAA} for details. Then a harmonic function $\w_V$ for $L_{V}$ is given by the formula
$$\w_{V}(x) = \lim_{t\to \8} \int_{\Rd} T_t(x,y) \, dy$$
and satisfies \eqref{double-bounded-h}. In this case for $f\in H^1_{L_{V}}(\Rd)$ we always have $\int f(x) \w_{V}(x) dx = 0$.
For details see \cite{DZ_JFAA, Dziubanski_Preisner_Annali_2017, DZ_Potential_2014}.

\subsubsection{Example 2.2: Inverse square potential.}
 Consider the inverse square potential $V(x) = \gamma |x|^{-2}$ on $\Rd$ with $\gamma>0$ and $n\geq 3$.
 This operator was studied in several papers, see for example \cite{ Killip_Visan, Liskevich_Sobol, Milman_Semenov, Ouhabaz}.
Specifically,  the Hardy space related to this operator was studied in \cite{DZ_Studia_DK, DP_Potential}. The space $H^1_{L_{V}}(\Rd)$ has local
character in the sense that  atoms are either classical atoms or local atoms of the type $|Q|^{-1}\chi_Q$ for some family
of cubes $Q$, see \cite{DZ_Studia_DK}. Obviously, for $f\in H^1_{L_{V}}(\Rd)$ there cannot be a general cancellation condition
like in \eqref{cancel} with any nontrivial function~$h$.

However, as we shall see, there is also another approach to atomic decompositions for $H^1_{L_{V}}(\Rd)$. Consider {the function}
$$h_{V}(x) = |x|^\tau,$$
where
$$\tau = \frac{\sqrt{(n-2)^2+4\gamma}-(n-2)}{2}>0.$$
It appears that $h_V$ is strictly related to the analysis of $T_t=\exp(-tL_{V})$, see \cite{Ouhabaz}.
\lem{prop_Ou}{
The function $h_{V}$ satisfies \ref{H1}--\ref{H4}.
}
\pr{
{For the measure $d\sigma_\be(x) =|x|^\be dx$, $\be>0$, we have
\eq{\label{me_sigma}
\sigma_\be(B(x,r))= r^n(|x|+r)^\be.
}
}
{The doubling condition for $d\sigma_{2\tau}(x) =h_{V}(x)^2\, dx$ follows easily from \eqref{me_sigma}.}
The semigroup related to $L_V$ satisfies   \eqref{ULG'}, see for example  \cite[Theorem 1.2]{Ouhabaz}. See also \cite{Liskevich_Sobol, Milman_Semenov}.
}
\lem{A1RH2ex2}{
For $p>1$ we have $h_{V}^{-1} \in A_p(\nu)$, where $d\nu(x) = h_{V}(x) ^2 dx$ on $\Rd$.
}
\pr{
{It is enough to verify  the condition \eqref{Apw} and this follows from \eqref{me_sigma}.}
}

\cor{cor_Sch}{
Let $n\geq 3$, $V(x) = \gamma |x|^{-2}$, $h_{V}(x) = |x|^\tau$, where $\tau = ( \sqrt{(n-2)^2+4\gamma}-(n-2))/2>0$.
Then the spaces $H^1_{L_{V}}(\Rd)$ and $H^1_{at}[\mu, h_{V}]$ coincide and have equivalent norms. Here $\mu$ is the Lebesgue measure on $\Rd$.
}

\subsection{Bessel operators} \

For $\a>-1$ and $\a\neq 1$ on $X=(0,\8)$ we consider the Euclidean distance and the measure $d\mu(x) = x^\a\, dx$.
The Bessel differential operator is given by
$$L_B f(x) = -f''(x) - \frac{\a}{x}f'(x), \qquad x>0.$$
Observe that a function $h$ satisfies $L_Bh=0$ if
$$h_B(x) = C_1 + C_2 x^{1-\a}.$$

\subsubsection{Example 3.1: Dirichlet Laplacian on $(0,\8)$.}\label{ex-lap}

Let us start with very basic example: $\a=0$, $X=(0,\8)$, and $L_{(0,\8),D}f = -f''$ with Dirichlet boundary condition at $x=0$.
The semigroup generated by this operator has the integral kernel
$$T_t(x,y) = (4\pi t)^{-1/2} \eee{\exp\eee{-\frac{(x-y)^2}{4t}}-\exp\eee{-\frac{(x+y)^2}{4t}}},$$
where $x,y,t>0$. Obviously, the space $H^1_{L_{(0,\8), D}}(X)$ is well studied, see e.g.
\cite{Chang_Krantz_Stein, Auscher_Russ} {(one could also use the results from  \cite{2018arXiv181006937K} and \cite{KP_2021arXiv}
 to get atomic and Riesz transform characterizations of $H^1_{L_{(0,\8), D}}(X)$).} In particular, $H^1_{L_{(0,\8), D}}(X)$ can be
  described by atomic decompositions, where atoms are
either classical atoms on $(0,\8)$ or local atoms of the type $a(x) = |I_m|^{-1} \chi_{I_m}(x)$, $I_m =(2^m,2^{m+1})$, $m\in \ZZ$.

On the other hand, our results provide a new atomic description of $H^1_{L_{(0,\8), D}}(X)$. The  (unbounded) harmonic function for $L_{(0,\8),D}$ is simply
$$h(x) = x.$$
Let $\nu$ be the measure on $(0,\8)$ with the density $x^2 dx$. One can easily check that
\spx{
\tt_t(x,y)=\frac{{T_t(x,y)}}{h(x)h(y)} &\simeq \nu(B(\sqrt{xy}, \sqrt{t}))^{-1} \exp\eee{-\frac{(x-y)^2}{4t}}\\
&\simeq \nu(B(x, \sqrt{t}))^{-1} \exp\eee{-\frac{(x-y)^2}{ct}},
}
so \ref{H4} holds. It is also easy to verify that \ref{H3}--\ref{H1} hold and $x^{-1} \in A_p(\nu)$ for every $p>1$.
As a result of Theorem \ref{main2} we have the following.

\cor{cor_Dir_Lap}{
If a function $f$ belongs to $H^1_{L_{(0,\8), D}}(X)$, then there exist{: $\la_k$, $a_k$, and intervals $B_k$,} such that  $f = \sum_k \la_k a_k$,
$\sum_k |\la_k| \simeq \norm{f}_{H^1_{L_{(0,\8), D}}(X)}$ and $a_k$ are atoms that satisfy:
\alx{
\circ \quad &\supp \,a_k \subseteq B_k,\\
\circ \quad &\eee{\int_{B_k} |a_k(x)|^2 \frac{dx}{x}}^{1/2} \leq \eee{\int_{B_k} x\, dx}^{-1/2}\\
\circ \quad &\int_{B_k} a_k(x)\,  x \, dx =0.
}
}
In other words, an atomic Hardy space with two types of atoms: global (with cancellations) and local (without cancellations) can be
described in a different, more uniform way, where all the atoms have cancellation condition, but w.r.t. a different,
unbounded harmonic function. In Appendix we provide a sketch of a direct proof of the equality of these two atomic spaces.

\subsubsection{Example 3.2: Bessel operator on $(0,\8)$ with Neumann boundary condition at $x=0$.}
Probably the most natural case is to consider $L_{B,(0,\8),N}$, i.e. the operator $L_B$ with Neumann boundary condition at $x=0$.
If $n:=\a+1 \in \NN$ then the analysis of $L_{B,(0,\8),N}$ is equivalent to the analysis of the radial part of the
Laplacian $-\Delta$ on $\Rd$. However, $L_{B,(0,\8),N}$ can be considered also for non-integer $\a$'s. For the results on the
Hardy spaces related to $L_{B,(0,\8),N}$ the reader is referred to \cite{BDT_d'Analyse, Preisner_JAT, DPW_JFAA}. Let us only
 mention briefly, that
 $$h_{B,(0,\8),N}(x) =1$$
 is the harmonic function and all atoms have cancellation of the form:
 $\int a(x) \, d\mu(x) =0$. In other words, the Hardy space $H^1_{(L_{B,(0,\8),N})}(X)$ is the geometric Hardy space in the sense of \cite{CoifmanWeiss_BullAMS}.

\subsubsection{Example 3.3: Bessel operator on $(0,\8)$ with Dirichlet boundary condition at $x=0$.}
 Now, consider $L_{B,(0,\8),D}$, i.e. the space with the Dirichlet boundary condition at $x=0$.
  This example coincides with {Example 3.2} if $\a>1$. However, for  $\a\in (-1,1)$ the function
$$h_{B,(0,\8), D}(x) =x^{1-\a}$$
is unbounded and harmonic for $L_{B,(0,\8),D}$. The crucial estimates \ref{H4} follow
from {\cite[Theorem 5.11]{Gyrya-Saloff-Coste}}. The rest of assumptions of Theorem \ref{main2} is a direct calculation.

\subsubsection{Example 3.4: Bessel operator on $(1,\8)$ with Dirichlet boundary condition at $x=1$.}
Let $\a>-1$, $\a\neq 1$, and $L_{B,(1,\8), D}$ be the Bessel operator with Dirichlet boundary condition at $x=1$.
In the case $n:=\a+1\in \NN$ one can think about Brownian motion on $\Rd \setminus B(0,1)$ killed when entering unit ball. Here
$$h(x) = |1-x^{1-\a}|$$
is the harmonic function. Observe that for $\a\in(-1,1)$ the function $h$ is unbounded, whereas for $\a>1$ we
 have bounded {$h$}, but $\lim_{x\to 1^+} h(x) =0$. Similarly as in the previous example, the estimates \ref{H4}
 follow from  {\cite[Theorem 5.11]{Gyrya-Saloff-Coste}} and the rest of the assumptions of Theorem \ref{main2} can be verified directly.

\section{Equivalence of different atomic decompositions}\label{appendix}

In some examples our results give a new atomic description of $H^1_L(X)$ even for the operators, for which another simple
 atomic description was known before. Let us explain this phenomena a bit in {the} simple example of Dirichlet Laplacian on
  $(0,\8)$, see Subsection \ref{ex-lap}.

Let $X=(0,\8)$ be a space equipped with the Lebesgue measure and denote $I_m = (2^m,2^{m+1})$,
$m\in \ZZ$. Consider the Dirichlet Laplacian  $L=-\Delta_{X,D}$ on $X$. It is known, see \cite{Chang_Krantz_Stein} and \cite{Auscher_Russ}, that
 if $f \in H^1_{L_{X,D}}(X)$, then $f =\sum_k \la_k a_k$, where $\sum_k |\la_k|\simeq \norm{f}_{H^1_{L_{X,D}}(X)}$ and $a_k$ are either:
\begin{itemize}
    \item {\it $\a_1$-atoms}: classical atoms on $(0,\8)$, i.e. for $a$ there exists an interval $B$ such that:
$$\supp \, a \subseteq B, \qquad \norm{a}_{L^2(X)} \leq |B|^{-1/2}, \qquad \int a(x) \, dx = 0$$
or
\item {\it $\a_2$-atoms}: local atoms of the form $a = |I_m|^{-1} \chi_{I_m}$, $m\in \ZZ$.
\end{itemize}
On the other hand $h(x) = x$ is $L$-harmonic and $h(0)=0$. One can easily prove that the kernel
of the semigroup $\exp(-tL_{X,D})$  satisfies \eqref{ULG'} and the measure $x^2\, dx$ is doubling on $(0,\8)$.
 Moreover, using \eqref{Apw}, on easily verifies that $h^{-1} \in A_p(x^2dx)$ for any $p>1$. As a consequence,
 from Theorem \ref{main2} we deduce that each $f\in H^1(L_{X,D})$ can be written as $f = \sum_k \la_k b_k$,
 where $\sum_k |\la_k|\simeq \norm{f}_{H^1_{L_{X,D}}(X)}$ and $b_k$ are
\begin{itemize}
    \item {\it $\be$-atoms}: for $a$ there exists a ball $B$ such that:
$$\supp\, b \subseteq B \subseteq I_k, \qquad \norm{b}_{L^2(\frac{dx}{x})} \leq \eee{\int_B x\, dx }^{-1/2}, \qquad \int x  b(x)\, dx = 0.$$
\end{itemize}

At a first glance it may be surprising, that $H^1_{L_{X,D}}(X)$ has these two different atomic decompositions.
 However, recall that since $h(x)=x$ is unbounded, we cannot say that $\int f(x) x\, dx =0$ even if $f = \sum_k \la_k b_k$
 and $\int b_k(x) x\, dx =0$ for each $k$, see Corollary \ref{corr} and Remark \ref{rem_1234}. The purpose of the following lemma
 is to show that $\a_1$-atoms and $\a_2$-atoms can be decomposed into $\be$-atoms and vice versa. However, we shall not consider
 arbitrary atoms, but, for simplicity of the presentation, we shall assume that the support of every atom considered is already
 contained in some dyadic interval $I_m$.
\lem{atomy}{There exists $C>0$ such that:
\begin{enumerate}
    \item [{\it (a)}] if $a$ is an $\a_2$-atom supported in $I_m$, $m\in \ZZ$, then there
	exist: $\be$-atoms $\eee{b_k}_{k\geq 0}$ and numbers $\eee{\la_k}_{k\geq 0}$ such
	that $a = \sum\limits_{k=0}^{\8} \la_k b_k$ and $\sum_{k=0}^{\8} |\la_k| \leq C$,
    \item [{\it (b)}] if $a$ is an $\a_1$-atom supported in $B \subseteq I_m$, $ m \in \ZZ$,
	then there exist: $N\in \NN$, $\be$-atoms $\eee{b_k}_{k=0}^N$, $\a_2$-atom $a_{N+1}$, and
	numbers $\eee{\la_k}_{k=0}^{N+1}$ such that $a = \sum\limits_{k=0}^{N} \la_k b_k + \la_{N+1}a_{N+1}$ and $\sum\limits_{k=0}^{N+1} |\la_k| \leq C$,
    \item [{\it (c)}] if $b$ is a $\be$-atom supported in $B \subseteq I_m$, $m\in \ZZ$,
	then there exist: $N\in \NN$, $\a_1$-atoms $\eee{a_k}_{k=0}^N$, $\a_2$-atom $a_{N+1}$, and
	numbers $\eee{\la_k}_{k=0}^{N+1}$ such that $b = \sum\limits_{k=0}^{N+1} \la_k a_k$ and $\sum\limits_{k=0}^{N+1} |\la_k| \leq C$.
\end{enumerate}
}
\begin{proof}
To prove {\it (a)} consider $a = |I_m|^{-1}\chi_{I_m}$ for some $m\in \ZZ$. Write
$$a(x) = \sum_{k=0}^\8 2^{-k} b_k(x)$$
where
$$ 2^{-k} b_k(x) = \tau_k \chi_{I_{m+k}}(x) - \tau_{k+1} \chi_{I_{m+k+1}}(x), \quad k=0,1,... \ .$$
Fix $\tau_0 = |I_m|^{-1} = 2^{-m}$. Recursively, we define
$$\tau_{k+1} = \tau_k \frac{\int_{I_k} x\, dx}{\int_{I_{k+1}} x\, dx } = \tau_k \frac{(2^{k+1})^2 - (2^{k})^2}{(2^{k+2})^2 - (2^{k+1})^2} = \frac{\tau_k}{4}$$
so that $\int b_k(x) x\, dx = 0$ for $k=0,1,...$ and $\tau_k = 2^{-m} 4^{-k}$.
Observe that $\supp \, b_k \subseteq I_{m+k}\cup I_{m+k+1}$ and
\spx{
\norm{b_k}_{L^2\eee{\frac{dx}{x}}} &\simeq C 2^k |\tau_k| \eee{\int_{I_{m+k}\cup I_{m+k+1}}\frac{dx}{x}}^{1/2} \leq C 2^k 2^{-m}4^{-k}\simeq C2^{-m-k} \\
&\simeq \eee{\int_{I_{m+k}\cup I_{m+k+1}} x\, dx}^{-1/2}. }
Therefore, $C^{-1} b_k$ are $\be$-atoms and {\it (a)} is proved.

To prove {\it (b)} assume that $a$ is an $\a_1$-atom supported in $B \subseteq I_m$ with some $m$.
Fix a sequence of intervals $B=Q_0 \subseteq Q_1 \subseteq ... \subseteq Q_N = I_m$,
where $|Q_{k+1}|/|Q_k|\leq 2$ and $2^N \simeq |I_m|/|B|$.  Write
$$a(x) = \sum_{k=0}^{N} \la_k b_k(x) + \la_{N+1} a_{N+1}(x)$$
where
\alx{
\la_k &= 2^{k-m} |B|, \quad k=0,...,N+1,\\
\la_0 b_0(x) &= a(x) - \tau_0 \chi_{Q_0}(x),\\
\la_k b_k(x) &= \tau_{k-1} \chi_{Q_{k-1}} - \tau_k \chi_{Q_k}(x), \quad k=1,...,N\\
\la_{N+1} a_{N+1} &= \tau_{N} \chi_{Q_N}
}
and $\tau_k$'s will be specified later on. Observe first that $\sum_{k=0}^{N+1} |\la_k| \simeq 2^{N-m}|B|\simeq  C$.
 Let $y_0$ be the center of $B \subseteq I_m$ and recall that $\int a(x) \, dx =0$. Choose $\tau_k$
  so that $\int x b_k(x) \, dx =0$ for $k=0,...,N$. In particular
\alx{
\tau_0 &= \eee{\int_B x dx }^{-1} \int a(x) x dx\\
\abs{\tau_0} &= \eee{\int_B x dx }^{-1} \abs{\int a(x) (x-y_0) dx} \leq (2^m|B|)^{-1} |B| \int |a(x)| dx \leq 2^{-m}
}
and, for $k=1,...,N$,
\alx{\tau_{k} &= \tau_{k-1} \frac{\int_{Q_{k-1}} x dx} {\int_{Q_k} x dx},\\
|\tau_k| &\leq |\tau_{k-1}|.
}

What is left is to check that $a_{N+1}/C$ is an $\a_2$-atom and $a_k/C$ are $\be$-atoms for $k=0,...,N$.
It is clear that $a_{N+1} = \la_{N+1}^{-1}\tau_N \chi_{I_m}$ and $|\la_{N+1}^{-1} \tau_N| \leq C 2^{N-m} |B| |\tau_0| \leq C |I_m|^{-1}$.
Moreover, for $k=0,...,N$ we have: $\supp \, b_k \subseteq Q_k \cup Q_{k-1}$ ($Q_{-1} = \emptyset$), $\int x b_k(x)  \, dx =0$
(by the choice of $\tau_k$), and
\spx{
\norm{b_k}_{L^2\eee{dx/x}} &\leq C |\tau_k| \la_k^{-1}  \eee{\int_{Q_k \cup Q_{k-1}} 2^{-m} \, dx}^{1/2} \leq 2^{-m} 2^{m-k}|B|^{-1}  (2^{k-m}|B|)^{1/2}\\
&\simeq  \eee{2^{k+m}|B|}^{-1/2} \simeq \eee{\int_{Q_k \cup Q_{k-1}} x\, dx}^{-1/2}.
}
This shows that $b_k/C$ are $\be$-atoms for $k=0,...,N$ and the proof of {\it (b)} is complete. The proof of {\it (c)}
is essentially the same as the case of {\it (b)}. We leave the details to the interested reader.
\end{proof}

\medskip

\section{Appendix: embedding $H^1_{L, S, w}(X)\subseteq L^1_w(X)$}
\label{appendix2}
\setcounter{equation}{0}

In this section we consider a self-adjoint operator $L$ as in Section \ref{ssec32}, i.e. $(X,d,\mu)$ is a doubling metric-measure space, see \eqref{e1.2d}, $\mu(X)=\8$, and the kernel $T_t(x,y)$ corresponding to the semigroup $\exp(-tL)$ satisfies \eqref{UG}.

Recall that $H^1_{L,w}(X) = H^1_{L,S,w}(X) = H^1_{L,at,M,w}(X)$, see Section \ref{ssec32} and Theorem \ref{Liu_Song}. Our main goal here is to prove that $H^1_{L, w}(X)$ embeds continuously in $L^1_w(X)$ as a Banach space. Notice that although the atoms in Definition \ref{atoms-Lw} satisfy $\norm{a}_{L^1_w(\mu)} \leq C$ the inclusion $H^1_{L, w}(X)\subseteq L^1_w(X)$ is not trivial, since in Definition \ref{atoms-Lw} the Hardy space is defined by an abstract completion of the space $\HH_{L,at,M,w}^1(X)$ and this space is defined by $L^2(\mu)$-convergence of the series $\sum_k \la_k a_k$. Nevertheless, using methods from Auscher, McIntosh and Morris \cite{Auscher_McIntosh_Morris} we shall prove in this appendix the following theorem that is needed in the proof of Theorem \ref{main1}.
\begin{Thm}\label{th7.0}
Let $w\in A_{2}(\mu)$. Then $H^1_{L, S, w}(X) \subseteq L^1_w(\mu)$ and
$$\norm{f}_{L^1_w(\mu)} \leq C \norm{f}_{H^1_{L, S, w}(X)}$$
with $C$ independent of $f$.
\end{Thm}
To prove Theorem \ref{th7.0} recall the following definition. Assume that $E_1$ is a normed space contained in a Banach space $E_2$ with $\norm{x}_{E_2} \leq C \norm{x}_{E_1}$ for $x\in E_1$. We say that {\it $E_1$ has a completion in $E_2$} if there exists a Banach space $E_3$ such that: $E_1\subseteq E_3 \subseteq E_2$, $E_1$ is dense in $E_3$, $\norm{x}_{E_1}=\norm{x}_{E_3}$ for $x\in E_1$, and $\norm{x}_{E_2}\leq C \norm{x}_{E_3}$ for $x\in E_3$. For more comments see \cite[p. 870-871]{Auscher_McIntosh_Morris}. We shall prove the following.
\begin{Prop}\label{th7.1}
Let $w\in A_{2}(\mu)$.  Then the completion  $H^1_{L, S, w}(X)$ of
 $\{f\in L^2(\mu):  \|S_L f\|_{L^1_w(\mu)} < \infty \}
 $ in $  L^1_w(\mu)$ exists.
\end{Prop}

Notice that Theorem \ref{th7.0} follows directly from Proposition \ref{th7.1} and the fact that the completion of a normed space is unique. The rest of this section is devoted to prove Proposition \ref{th7.1}.

First, we recall the notion of the weighted tent spaces on $X$ from  \cite{Song_Wu}, see also
 \cite{CMS, Russ}. For a measurable function $F$
defined on $X\times(0,\infty)$ consider
\begin{eqnarray*}
{\mathcal A} F(x):=\left( \int_0^{\infty}\!\int_{d(y,x)<  t}
|F(y,t)|^2 {d\mu(y)\over \mu(B(x, t))}{dt\over t}\right)^{1/2}.
\end{eqnarray*}
 For $p\in [1, \infty)$ and $w\in A_{\infty}(\mu)$, the tent space
  $T^p_{w}(X)$  is defined as the space
of measurable functions $F$ on $X\times (0,\infty)$ for which
${\mathcal A}F\in L^p_w(\mu)$. This is equipped with
$\|F\|_{T^p_{ w}(X)}:=\|{\mathcal A}F\|_{L^p_w(\mu)}$.
For simplicity we will often write $T^p(X) $ in place of $T^p_{ w}(X)$ with $w=1$.
The tent space $T^{\infty}_{w}(X)$ is the Banach space of all $F$ satisfying
$$
\|F\|_{T^{\infty}_{w}(X)}:=\sup_{x\in X}\sup_{B\in {\mathbb B}(x)}  \left( {1\over \mu_w(B)} \iint_{T(B)} |F(y,t)|^2
{ \mu(B(y, t))\over \mu_w(B(y,t))} d\mu(y) {dt\over t}\right)^{1/2},
 $$
 where ${\mathbb B}(x)$ denotes the set of all balls $B\subseteq X$ with the property that $x\in B$,
 and the tent $T(B)=\{ (y, t)\in X\times (0, \infty): d(y, X\backslash B)\geq t\}.$
  Using a similar argument as in \cite[Lemma 2.5]{CCFY}, where the case $X={\mathbb R^n}$ is considered, it can be verified that for   $w\in A_{\infty}(\mu)$,  the pairing
$$\langle F, G\rangle:= \int_{X\times (0, \infty)}\
 F(x,t){\overline { G(x,t)}} \ {d\mu(x)dt\over t}
 $$
 realizes
  $T^{\infty}_w(X)$ as equivalent with the Banach space dual of $T^1_w(X)$.

 Recall that a measurable function $A$ on
$X\times(0,\infty)$ is said to be a $T^1_{w}$-atom if there exists a ball
$B\subset X$ such that $A$ is supported in the tent $T(B)$
  and $$\|A\|_{T^2_{w}(X)}\leq \mu_w(B)^{-1/2}.$$
It has been proved in \cite[Theorem 1.8]{Song_Wu}, see also \cite[Remark 3.3]{Song_Wu},
that every $F\in T^1_{w}(X)$ has an atomic
decomposition.

\begin{Lem}\label{prop7.6} {Let $w\in A_\8(\mu)$}. For every element $F\in T^1_{w}(X)$ there exist
a  sequence $\{\lambda_j\}_{j=1}^\infty$ and a sequence of
$T^1_{w}$-atoms $\{A_j\}_{j=1}^\infty$ such that $\sum |\lambda_j| < \infty$ and
\begin{equation}\label{eqtentdecomp}
F=\sum_{j}\lambda_jA_j\quad\mbox{in }\,T^1_{w}(X) \,\text{ and a.e. in } X\times (0,\infty).
\end{equation}
Moreover,  $$
\sum_{j}|\lambda_j|\approx \|F\|_{T^1_{w}(X)},$$
where the implicit constants depend only on the homogeneous space properties of $X$.

Finally, if
$F \in T^1_{w}(X)\cap T^2(X)$, then the decomposition (\ref{eqtentdecomp})
also converges in $T^2(X)$.
\end{Lem}

  Let  $K_{\cos(t\sqrt{L})}(x,y)$ denote  the integral kernel
of the operator $\cos(t\sqrt{L})$.
It is known, see for example \cite{S2}, that there exists a constant $c_0>0$ such that for every
$t>0$,
$$
\supp K_{\cos(t\sqrt{L})}\subset {\mathcal D}_t:=
\Big\{(x,y)\in X\times X: \,d(x,y)\leq c_0t\Big\}.
$$
 Let $M\geq 1$, and let $\varphi\in C^{\infty}_0(\mathbb R
 )$ be
even, $\mbox{supp}\,\varphi \subset (-c_0^{-1}, c_0^{-1})$
  with $\varphi \geq c > 0$ on $(-1/(2c_0), 1/(2c_0))$.
Let $\Phi$ denote the Fourier transform of
$\varphi$.  Set $\Psi(x):=x^{2(M+1)}\Phi(x)$,
$x\in{\mathbb{R}}$.
Consider the operator
$\pi_{\Psi,L}: T^2(X)\rightarrow L^2(\mu)$, given by
$$
\pi_{\Psi,L}(F)(x):= \int_0^{\infty}\Psi(t\sqrt L)\big(F(\cdot,\, t)\big)(x){dt\over t},
$$
where the improper integral converges weakly in $L^2(\mu)$. The bound
$$
\|\pi_{\Psi,L}F\|_{L^2(\mu)} \leq C_M \|F\|_{T^2(X)},\,\,\,\,M\geq 0,
$$
 follows readily by duality and the $L^2(\mu)$ quadratic estimate.
 We have the following analogue of the well-known argument of \cite[Proposition 4.10]{Song_Wu}.

\begin{Lem}\label{le7.7} Let $w\in A_{2}(\mu)$.  Suppose that $A$ is a $T^1_{w}$-atom associated to a ball
$B\subset X$ (or more precisely, to its tent $T(B)$).  Then  for every
$M\geq1$, there is a uniform constant $C_M$ such that $C_M^{-1}\, \pi_{\Psi,L} (A)$
is an $(L, M, w)$-atom associated with the concentric ball $2B$.
\end{Lem}

With the above preliminary results, we now start to prove Theorem~\ref{th7.1} by adapting an argument as in
\cite[Lemma 3.9 and Theorem 3.10]{Auscher_McIntosh_Morris}.

\begin{proof}[Proof of Proposition~\ref{th7.1}]
Let $w\in A_{2}(\mu)$ and set $E_{L, S, w}^1(X)=\{f\in L^2(\mu):  \|S_L f\|_{L^1_w(\mu)} < \infty \}$.

\noindent
{\bf Step 1: }
$E_{L, S, w}^1(X) \subseteq L^1_w(\mu)$ and $\norm{f}_{L^1_w(\mu)} \leq C \norm{S_L f}_{L^1_w(\mu)}$ for $f\in E_{L, S, w}^1(X)$.

Let $f\in E_{L,S, w}^1(X)$ and set
$$F(\cdot,t):= t^2Le^{-t^2L}f.$$
We note that $F\in T^2(X)\cap T^1_{w}(X)$, by   the definition of $E_{L, S,  w}^1(X)$.
Therefore, by Lemma~\ref{prop7.6}, we have that
$$F =\sum_j \lambda_j \, A_j,$$
where each $A_j$ is a $T^1_{w}$-atom, the sum converges in both $T^2(X)$ and $T^1_{w}(X)$,
and
$$\sum_j |\lambda_j|\leq C\|F\|_{T^1_{w}(X)}=C\|S_Lf\|_{L^1_w(\mu)}.
$$
Also, by $L^2$-functional
calculus, see \cite{Mc}, and using that $f \in L^2(\mu)$, we have the ``Calder\'{o}n reproducing formula"
\begin{eqnarray*}
f(x)&=&c_{\Psi}\int_0^{\infty}\Psi(t\sqrt L)(t^{2}L e^{-t^2{L}}f)(x) {dt\over t}\\
&=&c_\Psi \,\pi_{\Psi,L}(F) = c_\Psi \,\sum \lambda_j\, \pi_{\Psi,L}(A_j),
\end{eqnarray*}
where
the last sum converges in $L^2(\mu)$ and  $E^1_{L, S, w}(X)$.
Moreover, by Lemma \ref{le7.7}, for every $M\geq 1$, we have that up to multiplication by some
harmless constant $C_M$,
each $a_j := c_\Psi\, \pi_{\Psi,L}(A_j)$
is  an $(L,M, w)$-atom, and so $\|a_j\|_{L^1_w(\mu)}\leq C$ with a constant $C>0$ independent of $j$.
Consequently,
$\sum_j \lambda_j \pi_{\Psi,L}(A_j)$ converges to ${\tilde f}$ in $L^1_w(\mu)$. We must have $f={\tilde f}\in L^1_w(\mu)$
since $L^1_w(\mu)$ and $L^2(\mu)$ are embedded in $L^{1/2}_{{\rm loc}}(\mu)$ (notice that $w\in A_2(\mu)$ implies $\int_B w^{-1}\, d\mu <\8$) and so $\sum_j \lambda_j \pi_{\Psi,L}(A_j)$ converges to $f$ in $L^1_w(\mu)$
with
$$
\|f\|_{L^1_w(\mu)}=\lim\limits_{n\to \infty} \Big\|\sum_j \lambda_j \pi_{\Psi,L}(A_j)\Big\|_{L^1_w(\mu)} \leq C \sum_j|\la_j| \leq
C\|S_Lf\|_{L^1_w(\mu)}= C\|f\|_{E^1_{L, S, w}(X)},
$$
and so  $E_{L,S, w}^1(X)\subseteq L^1_w(\mu)$.

\noindent
{\bf Step 2: }
The completion of $E^1_{L,S,w}(X)$ in $L^1_w(\mu)$ exists.

We shall use the following proposition that states necessary and sufficient condition for the existence
of a completion inside a given Banach space, see \cite[Proposition 2.2]{Auscher_McIntosh_Morris}.

\begin{Lem}\label{prop7.2}
Let $E_1$ and a normed space and suppose that $E_1\subseteq E_2 $ for some Banach space $E_2$,
so the identity $I: E_1\to E_2$ is bounded. The following are equivalent:

\begin{itemize}
\item[(i)]  the completion of $E_1$ in $E_2$ exists;
\item[(ii)]  for each Cauchy sequence $(x_n)_n$ in $E_1$ that converges to $0$ in $E_2$, it follows that
 $(x_n)_n$ converges to $0$ in $E_1$.
\end{itemize}
\end{Lem}

The proof of the  above lemma is a simple functional analysis argument. To complete the proof of Proposition~\ref{th7.1} we consider  $E_1=E^1_{L, S, w}(X)$ and $E_2=L^1_w(\mu)$. From $(ii)$ of Lemma \ref{prop7.2} it is enough to consider $(f_n)_n$ that is a Cauchy sequence in $E^1_{L,S,  w}(X)$ and converges to $0$ in $L^1_w(\mu)$. We claim that
$(f_n)_n$ converges to $0$ in $E^1_{L, S, w}(X)$.
 For all $n$ we have that
$$
\|f_n\|_{E^1_{L, S, w}(X)}=\|S_Lf_n\|_{L^1_w(\mu)} = \|t^2Le^{-t^2L} f_n\|_{T_{w}^1(X)}
$$
and, since $(f_n)_n$ is Cauchy sequence in $E^1_{L, S, w}(X)$, there exists $U$ in $T_{w}^1(X)$ such that $t^2Le^{-t^2L} f_n$
converges to $U$ in $T_{w}^1(X)$.

Denote
$$
\mm=\Big\{ F\in T^2(X)\cap T_{w}^{\infty}(X):    \frac{ \pi_{L}F}{w}\in L^{\infty}(\mu)   \Big\},
$$
where $\pi_{L}F(x):=\int_0^{\infty} t^2L e^{-t^2L} F dt/t$. Following the method from \cite{Auscher_McIntosh_Morris} {we claim that $\mm$} is weak-star dense in $T_w^{\infty}(X)$. {Indeed, one can show that for $F \in T^\8_w(X)$ the functions $F\cdot \chi_{A_n}$ belongs to $\mm$, where $A_n = [1/n, n] \times \{x\in X \ : \ d(x,x_0)<n, \  w^{-1}(x) \leq n\})$, c.f.  \cite[Remark 3.11 and p.882]{Auscher_McIntosh_Morris}. }
Using the duality, we have that for any $F\in \mm,$
\begin{eqnarray*}
|\langle U, F\rangle |&\leq& |\langle U -t^2Le^{-t^2L} f_n, F\rangle | + |\langle  t^2Le^{-t^2L} f_n, F\rangle |\\
&\leq&C \| U -t^2Le^{-t^2L} f_n\|_{T^1_w(X)} \|F\|_{T^{\infty}_w(X)} +\|f_n\|_{L^1_w(\mu)} \|w^{-1}\cdot \pi_{L}F\|_{L^{\infty}(\mu)}.
\end{eqnarray*}
  Moreover,
since $ \| w^{-1}\cdot\pi_{ L} F\|_{L^{\infty}(\mu)}<\infty$ and $\|F\|_{T^{\infty}_w(X)}<\infty$, the preceding convergence results
imply that
$$
\langle U, F\rangle =0, \ \ \ \forall F\in \mm.
$$
Then, since $U\in T^1_w(X)$ and $\mm$ is weak-star dense in $T^{\infty}_w(X)$, it follows
that $\langle U, F\rangle =0$ for all $F\in T^{\infty}_w(X),$  hence $U=0$ and $(f_n)_n$ converges to $0$ in $E^1_{L, S, w}(X)$ as claimed.
This proves that the completion  $H^1_{L, S, w}(X)$ of $E^1_{L,S,  w}(X)$ in $  L^1_w(\mu)$ exists.
Hence,  the  proof of Proposition~\ref{th7.1} is complete.
\end{proof}

\bigskip

\noindent
{\bf Acknowledgements:} The authors would like to thank Laurent Saloff-Coste and the anonymous referees for helpful remarks. AS and MP were
 supported by Australian Research Council  Discovery Grant DP DP160100941 and DP200101065. MP was supported by the grant No. 2017/25/B/ST1/00599 from National Science Centre (Narodowe Centrum Nauki), Poland.
 LY was supported  by the grant No.~11521101 and~11871480  from  the NNSF
of China.

\bibliographystyle{amsplain}        

\def\cprime{$'$}
\providecommand{\bysame}{\leavevmode\hbox to3em{\hrulefill}\thinspace}
\providecommand{\MR}{\relax\ifhmode\unskip\space\fi MR }
\providecommand{\MRhref}[2]{%
  \href{http://www.ams.org/mathscinet-getitem?mr=#1}{#2}
}
\providecommand{\href}[2]{#2}

\end{document}